     \def\section{\@startsection{section}{1}%
     \z@{.7\linespacing\@plus\linespacing}{.5\linespacing}%
     {\bfseries
     \centering
     }}
     \def\@secnumfont{\bfseries}
\newtheorem{theorem}{Theorem}[section]
\newtheorem{lemma}[theorem]{Lemma}
\newtheorem{proposition}[theorem]{Proposition}
\newtheorem{corollary}[theorem]{Corollary}
\theoremstyle{definition}
\theoremstyle{remark}
\numberwithin{equation}{section}
\newcommand{\rF}{\mathcal{F}}
\newcommand{\rN}{\mathcal{N}}
\newcommand{\rP}{\mathcal{P}}
\newcommand{\rE}{\mathcal{E}}
\newcommand{\rL}{\mathcal{L}}
\newcommand{\rS}{\mathcal{S}}
\newcommand{\CC}{\mathbb{C}}
\newcommand{\EE}{\mathbb{E}}
\newcommand{\NN}{\mathbb{N}}
\newcommand{\PP}{\mathbb{P}}
\newcommand{\QQ}{\mathbb{Q}}
\newcommand{\RR}{\mathbb{R}}
\newcommand{\ZZ}{\mathbb{Z}}
\def\Rp{\RR^+}
\def\O{\Omega}
\def\wt{\widetilde}
\def\indic{{\mathop{\rm 1\mkern-4mu l}}}
\def\qq{\qquad}
\def\wh{\widehat}
\def\wt{\widetilde}
\def\indic{{\mathop{\rm 1\mkern-4mu l}}}
\def\ecarte{\vphantom{\buildrel\bigtriangleup\over =}}
\def\Rp{{\RR^{+}\!}}
\def\[{{\mathord{[\![}}}
\def\]{{\mathord{]\!]}}}
\def\norme#1{\left\| #1\right\|}
\def\normca#1{{\left\| #1\right\|}^2}
\def\ab#1{\left\vert #1\right\vert}
\def\seq#1{\left({#1}_n\right)}
\def\x{\chi}
\def\X{\x}
\def\O{\Omega}
\def\a{\alpha}
\def\s{\sigma}
\def\n{\nu}
\def\l{\lambda}
\def\o{\omega}
\begin{document}

\title[Markov Chains and Dynamical Systems]{Markov Chains and Dynamical Systems:\\The Open System Point of View\,*}

\author{St\'ephane ATTAL}
\thanks{* Work supported by ANR project ``HAM-MARK" N${}^\circ$ ANR-09-BLAN-0098-01}
\address{Universit\'e de Lyon\\ Universit\'e de Lyon 1, C.N.R.S.\\ Institut Camille Jordan\\ 21 av Claude Bernard\\ 69622 Villeubanne cedex, France}
\email{attal@math.univ-lyon1.fr}
\urladdr{http://math.univ-lyon1.fr/~attal}
\subjclass[2000] {Primary 37A50, 60J05, 60J25, 60H10; Secondary 37A60, 82C10}

\keywords{Markov chains, Dynamical systems, Determinism, Open systems, Stochastic differential equations, Markov processes}

\begin{abstract}
This article presents several results establishing connections between Markov chains and dynamical systems, from the point of view of open systems in physics. We show how all Markov chains can be understood as the information on one component that we get from a dynamical system on a product system, when losing information on the other component. We show that passing from the deterministic dynamics to the random one is characterized by the loss of algebra morphism property; it is also characterized by the loss of reversibility. In the continuous time framework, we show that the solutions of stochastic differential equations are actually deterministic dynamical systems on a particular product space. When losing the information on one component, we recover the usual associated Markov semigroup. 
\end{abstract}

\maketitle


\section{Introduction}
This article aims at exploring the theory of Markov chains and Markov processes from a particular point of view. This point of view is very physical and commonly used in the theory of {\it open systems}. Open systems are physical systems, in classical or in quantum mechanics, which are not closed, that is, which are interacting with another system. In general the system we are interested in is ``small" (for example, it has only a finite number of degrees of freedom), whereas the outside system is very large (often called the``environment", it may be a heat bath typically). 

This is now a very active branch of research to study such systems coupled to an environment. In classical mechanics they are used to study conduction problems (Fourier's law for example, see \cite{O-B}, \cite{Rey}) but more generally out of equilibrium dynamics (see \cite{Rue}, \cite{Bod}). In quantum mechanics, open systems appear fundamentally for the study of decoherence phenomena (see \cite{Spe}), but also it is the basis of quantum communication (see \cite{Pre}). Problems of dissipation, heat conduction, out of equilibrium dynamics in quantum mechanics (see \cite{JOP}, \cite{K-P}) lead to very important problems which are mostly not understood at the time we write this article.

The aim of this article is to make clear several ideas and connections between deterministic dynamics of closed systems, effective dynamics of open systems and Markov processes. 

\smallskip
Surprisingly enough these ideas are made rather clear in the literature when dealing with the quantum systems, but not that much with classical ones! Indeed, it is common in quantum mechanics to consider a bipartite system on which one component is not accessible (it might be an environment which is too complicated to be described, or it might be Bob who is sharing the photons of a correlated pair with Alice, in Quantum Information Theory). It is well-known that, tracing out over one component of the system, the unitary Schr\" odinger dynamics becomes described by completely positive maps, in discrete time, or completely positive semigroups, in continuous time. 

In \cite{A-P}, for example, the authors show how every discrete time semigroup of completely positive maps can be described by a realistic physical system, called ``Repeated Quantum Interactions". They show that in the continuous time limit these Hamiltonian dynamics spontaneously converge to a dynamics described by a quantum Langevin equation. 

\medskip
In this article we establish many similar results in the context of classical dynamical systems and Markov chains. The article is structured as follows. 
In Section \ref{S:markov_dynamical}, we show that Markov chains appear from any dynamical system on a product space, when averaging out one of the two components. This way, Markov chains are interpreted as what remains on one system when it interacts with some environment but we do not have access to that environment. The randomness appears directly for the determinism, solely by the fact that we have lost some information. We show that any Markov chain can be obtained this way. We also show two results which characterize what properties are lost when going from a deterministic dynamical system to a Markov chain: typically the loss of algebra morphism property and the loss of reversibility. 

In Section \ref{S:SDE_dynamical} we explore the context of classical Markov process in the continuous time setup. We actually concentrate on stochastic differential equations. Despite of their ``random character", we show that stochastic differential equations are actually deterministic dynamical systems. They correspond to a natural dynamical system which is used to dilate some Markov processes into a deterministic dynamics. The role of the environment is played by the canonical probability space (here the Wiener space), the action of the environment is the noise term in the stochastic differential equation.

\section{Markov Chains and Dynamical Systems}\label{S:markov_dynamical}

\subsection{Basic Definitions}

Let us recall some basic definitions concerning dynamical systems and Markov chains.

\smallskip
Let $(E,\rE)$  be a measurable space. Let  $\wt T$ be a
measurable function from $E$ to $E$.
We then say that $\wt T$ is a \emph{dynamical
system} on $E$. 
Such a mapping $\wt T$ induces a natural mapping $T$ on
 ${\rL}^\infty(E)$ defined by
$$
Tf(x)=f(\wt T x)\,.
$$
Note that this mapping clearly satisfies the following properties (proof left to the reader).

\begin{proposition}\label{P:dynamical}\ 

\smallskip
\noindent i) $T$ is a $*$-homomorphism of the $*$-algebra $\rL^\infty(E)$,

\smallskip
\noindent ii) $T(\indic_E)=\indic_E$,

\smallskip
\noindent iii) $\norme{T}= 1$.
\end{proposition}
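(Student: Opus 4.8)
The plan is to reduce every assertion to the defining identity $Tf(x)=f(\wt T x)$, exploiting that all the algebraic structure of $\rL^\infty(E)$ is defined pointwise, so that precomposition with the fixed map $\wt T$ automatically respects it. Before addressing the three claims I would first check that $T$ is well defined as a map $\rL^\infty(E)\to\rL^\infty(E)$: if $f$ is bounded and measurable then $Tf=f\circ\wt T$ is measurable as a composition of measurable maps, and it is bounded since $\norme{Tf}_\infty=\sup_{x\in E}\ab{f(\wt Tx)}\le\sup_{y\in E}\ab{f(y)}=\norme{f}_\infty$. This inequality is recorded at the outset because it already yields half of (iii).

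For (i) I would verify linearity, multiplicativity and compatibility with the involution separately, each being a one-line pointwise computation. Linearity: $T(\a f+\b g)(x)=(\a f+\b g)(\wt Tx)=\a f(\wt Tx)+\b g(\wt Tx)=\a\,Tf(x)+\b\,Tg(x)$. Multiplicativity: $T(fg)(x)=(fg)(\wt Tx)=f(\wt Tx)\,g(\wt Tx)=Tf(x)\,Tg(x)$, so $T(fg)=(Tf)(Tg)$. Compatibility with the $*$-operation (complex conjugation): $T(\ol f)(x)=\ol f(\wt Tx)=\ol{f(\wt Tx)}=\ol{Tf(x)}$, i.e. $T(\ol f)=\ol{Tf}$. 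Together these say precisely that $T$ is a $*$-homomorphism.

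Claims (ii) and (iii) are then immediate. For (ii), since $\wt Tx\in E$ for every $x$, one has $T(\indic_E)(x)=\indic_E(\wt Tx)=1=\indic_E(x)$, hence $T(\indic_E)=\indic_E$. For (iii), the estimate obtained in the first step gives $\norme{T}\le 1$, while (ii) together with $\norme{\indic_E}_\infty=1$ forces $\norme{T}\ge\norme{T\indic_E}_\infty=1$; hence $\norme{T}=1$.

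There is essentially no hard step here; the only point requiring a word of care is the well-definedness in the first paragraph. If $\rL^\infty(E)$ is read as the algebra of bounded measurable functions, the argument above is complete as written. If instead it is understood as classes modulo a reference measure $\m$, then one must additionally assume that $\wt T$ is non-singular (it pulls $\m$-null sets back to $\m$-null sets) so that $f=g$ $\m$-a.e. implies $f\circ\wt T=g\circ\wt T$ $\m$-a.e.; under this hypothesis $T$ descends to the quotient and all three statements go through unchanged.
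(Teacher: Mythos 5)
Your proof is correct and complete; the paper itself leaves this proposition to the reader, and your pointwise verification (precomposition with $\wt T$ respects the pointwise-defined algebra operations, plus the norm estimate $\norme{Tf}_\infty\le\norme{f}_\infty$ combined with $T\indic_E=\indic_E$ to pin down $\norme{T}=1$) is exactly the intended argument. Your closing remark on the two possible readings of $\rL^\infty(E)$ is a sensible precaution; since the paper equips $E$ with no reference measure at this point, the ``bounded measurable functions'' reading applies and no non-singularity hypothesis is needed.
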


\smallskip
What is called \emph{dynamical system} is actually the associated discrete-time semigroup $({\wt T}^n)_{n\in\NN}$, when acting on points, or $(T^n)_{n\in\NN}$, when acting on functions. 

When the mapping $\wt T$ is invertible, then so is the associated operator $T$. The semigroups $({\wt T}^n)_{n\in\NN}$ and $(T^n)_{n\in\NN}$ can then be easily extended into one-parameter groups $({\wt T}^n)_{n\in\ZZ}$ and $(T^n)_{n\in\ZZ}$, respectively. 

\bigskip
Let us now recall basic definitions concerning Markov chains.
Let $(E,\rE)$ be a measurable space. A mapping $\nu$ from $E\times \rE$ to $[0,1]$ is a \emph{Markov kernel} if 

\smallskip
\noindent i) $x\mapsto \nu(x,A)$ is a measurable function, for all $A\in\rE$,

\smallskip
\noindent ii) $A\mapsto \nu(x,A)$ is a probability measure, for all $x\in E$. 

\smallskip
When $E$ is a finite set, then $\nu$ is determined by the quantities 
$$
P(i,j)=\nu(i,\{j\})
$$
which form a \emph{stochastic matrix}, i.e. a square matrix with positive entries and sum of each row being equal to 1.

\smallskip
In any case, such a Markov kernel $\nu$ acts on $\rL^\infty(E)$ as follows:
$$
\n\circ f(x)=\int_E f(y)\, \n(x,dy)\,.
$$

\smallskip
A linear operator $T$ on $\rL^\infty(E,\rE)$ which is of the form
$$
Tf(x)=\int_E f(y)\, \n(x,dy)\,,
$$
for some Markov kernel $\n$, is called a \emph{Markov operator}. 

\smallskip
In a dual way, a Markov kernel $\n$ acts on probability measures on $(E,\rE)$. Indeed, if $\PP$ is a probability measure on $(E,\rE)$ then so is the measure $\PP\circ\n$ defined by 
$$ 
\PP\circ\nu(A)=\int_E \nu(x,A)\,\PP(dx)\,.
$$

\smallskip
Finally, Markov kernels can be composed. If $\nu_1$ and $\nu_2$ are two Markov kernels on $(E,\rE)$ then so is 
$$
\n_1\circ\n_2(x,A)=\int_E \n_2(y,A)\, \n_1(x,dy)\,.
$$
This kernel represents the Markov kernel resulting from making a first step following $\n_1$ and then another step following $\n_2$. 

\medskip
A \emph{Markov chain} with \emph{state space} $(E,\rE)$ is a discrete-time stochastic process $(X_n)_{n\in\NN}$ defined on a probability space $(\Omega,\rF,\PP)$ such that each $X_n\,:\,\O\rightarrow E$ is measurable and
$$
\EE\left[f(X_{n+1})\,\vert\, X_0,X_1,\ldots,X_n\right]=\EE\left[f(X_{n+1})\,\vert\,X_n\right]
$$
for all bounded function $f\,:\, E\rightarrow \RR$ and all $n\in \NN$. In particular, if $\rF_n$ denotes the $\s$-algebra generated by $X_0,X_1,\ldots, X_n$, then the above implies
$$
\EE\left[f(X_{n+1})\,\vert\,\rF_n\right]=L_nf(X_{n})
$$
for some function $L_nf$. The Markov chain is \emph{homogeneous} if furthermore $L_n$ does not depend on $n$. We shall be interested only in this case and we denote by $L$ this unique value of $L_n$:
\begin{equation}\label{markov}
\EE\left[f(X_{n+1})\,\vert\,\rF_n\right]=Lf(X_{n})\,.
\end{equation}
Applying successive conditional expectations, one gets
$$
\EE\left[f(X_{n})\,\vert\,\rF_0\right]=L^nf(X_{0})\,.
$$
 
If $\nu(x,dy)$ denotes the conditional law of $X_{n+1}$ knowing $X_n=x$, which coincides with the  conditional law of $X_{1}$ knowing $X_0=x$, then $\nu$ is a Markov kernel and one can easily see that
$$
Lf(x)=\int_E f(y)\, \nu(x,dy)=\nu\circ f(x)\,.
$$
Hence $L$ is the Markov operator associated to $\nu$.

With our probabilistic interpretation we get easily that $\n\circ f(x)$ is the expectation of $f(X_1)$ when $X_0=x$ almost surely. 
The measure $\PP\circ\n$ is the distribution of $X_1$ if the distribution of $X_0$ is $\PP$.

\smallskip
We end up this section with the following last definition. 
 A Markov kernel $\n$ is said to be \emph{deterministic} if for all $x\in E$ the measure $\n(x,\,\cdot\,)$ is a Dirac mass. This is to say that there exists a measurable mapping $\wt T\,:\,E\rightarrow E$ such that 
$$
\n(x,dy)=\delta_{\wt T(x)}(dy)\,.
$$
In other words, the Markov chain associated to $\n$ is not random at all, it maps with probability 1, each point $x$ to  $\wt T(x)$: it is a dynamical system.

\subsection{Reduction of Dynamical Systems}

Now consider two measurable spaces $(E,\rE)$ and $(F,\rF)$, together
with a dynamical system $\wt T$ on $E\times F$, equipped with the product $\s$-field. As above, consider the
lifted mapping $T$ acting on ${\rL}^\infty(E\times F)$. 

For any
bounded measurable function $f$ on $E$, we consider the bounded (measurable) function
$f\otimes \indic$ on $E\times F$ defined by
$$
(f\otimes\indic)(x,y)=f(x)\,,
$$
for all $x\in E$, $y\in F$. 

Assume that $(F,\rF)$ is equipped with a probability measure $\mu$. We shall be interested in the mapping $L$ of ${\rL}^\infty(E)$
defined by
\begin{equation}\label{E:markov_dynam}
Lf(x)=\int_F T(f\otimes \indic)(x,y)\,d\mu(y)=\int_F (f\otimes \indic)\wt T(x,y)\,d\mu(y)\,.
\end{equation}
In other words, we have a deterministic dynamical system on a product
space. We place ourselves from one component point of view only (we  have access
to $E$ only). Starting from a point $x\in E$ and a function $f$ on $E$
we want to see how they evolve according to $T$, but seen from the $E$
point of view. The function $f$ on $E$ is naturally lifted into a
function $f\otimes \indic$ on $E\times F$, that is, it still acts on
$E$ only, but it is now part of a ``larger world''. We make $f\otimes \indic$ evolve
according to the deterministic dynamical system $T$. Finally, in order
to come back to $E$ we project the result onto $E$, by taking the
average on $F$ according to a fixed measure $\mu$ on $F$. This is to say that, from the set $E$, what we see of the action of the ``environment" $F$ is just an average with respect to some measure $\mu$.  

\begin{theorem}\label{T:dynamical_markov}
The mapping $L$ is a Markov operator on $E$.
\end{theorem}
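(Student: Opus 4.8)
The goal is to show that the operator $L$ defined by $Lf(x)=\int_F T(f\otimes\indic)(x,y)\,d\mu(y)$ is a Markov operator, that is, that there exists a Markov kernel $\nu$ on $(E,\rE)$ for which $Lf(x)=\int_E f(y)\,\nu(x,dy)$. The natural strategy is to produce the candidate kernel explicitly from the dynamics $\wt T$ and the measure $\mu$, and then verify the two defining conditions of a Markov kernel (measurability in $x$, probability measure in $A$). I would begin by writing $\wt T(x,y)=(\wt T_1(x,y),\wt T_2(x,y))$ for its two coordinates, so that $(f\otimes\indic)\wt T(x,y)=f(\wt T_1(x,y))$. Then
\begin{equation}\label{E:plan_L}
Lf(x)=\int_F f\bigl(\wt T_1(x,y)\bigr)\,d\mu(y)\,.
\end{equation}
This already exhibits $L$ as an averaging of $f$ against the pushforward of $\mu$ under the map $y\mapsto \wt T_1(x,y)$, which is exactly the shape a Markov kernel should have.

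\emph{Defining the kernel.} The plan is to set, for each fixed $x\in E$ and $A\in\rE$,
\begin{equation}\label{E:plan_nu}
\nu(x,A)=\mu\bigl(\{y\in F:\wt T_1(x,y)\in A\}\bigr)=\int_F \indic_A\bigl(\wt T_1(x,y)\bigr)\,d\mu(y)\,.
\end{equation}
In other words, $\nu(x,\cdot)$ is the image measure of $\mu$ under the section $\wt T_1(x,\cdot)$. Applying \eqref{E:plan_nu} with $f=\indic_A$ in \eqref{E:plan_L} gives $L\indic_A(x)=\nu(x,A)$, and a standard approximation argument (indicators, then simple functions, then bounded measurable $f$ via monotone/dominated convergence) upgrades this to $Lf(x)=\int_E f(y)\,\nu(x,dy)$ for all bounded measurable $f$. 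This is the step that formally identifies $L$ with the Markov operator attached to $\nu$.

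\emph{Verifying the kernel axioms.} For condition (ii), fix $x$: the map $A\mapsto \nu(x,A)$ is the image of the probability measure $\mu$ under a measurable map, hence is itself a probability measure on $(E,\rE)$; in particular $\nu(x,E)=\mu(F)=1$. For condition (i), fix $A$ and one must show $x\mapsto \nu(x,A)=\int_F \indic_A(\wt T_1(x,y))\,d\mu(y)$ is $\rE$-measurable. I expect this to be the main technical obstacle, since it requires measurability of an integral depending on a parameter. The route is first to observe that $(x,y)\mapsto \indic_A(\wt T_1(x,y))$ is measurable on the product space (because $\wt T$ is measurable and $E\times F$ carries the product $\sigma$-field, so the coordinate projection composed with $\wt T$ is measurable), and then to invoke the standard result that integrating a jointly measurable, bounded function over one variable against a fixed measure yields a measurable function of the remaining variable. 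This is a Fubini/Tonelli-type measurability statement, and it is the one place where the product $\sigma$-field hypothesis is genuinely used. Once (i) and (ii) are established, $\nu$ is a Markov kernel and the identification $Lf(x)=\int_E f\,d\nu(x,\cdot)$ completes the proof.
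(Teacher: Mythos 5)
Your proposal follows essentially the same route as the paper: decompose $\wt T$ into its two coordinate maps, observe that only the $E$-valued coordinate matters, define $\nu(x,\cdot)$ as the image of $\mu$ under the section $y\mapsto \wt T_1(x,y)$, and identify $Lf(x)=\int_E f(z)\,\nu(x,dz)$ by the Transfer Theorem. The only difference is that you spell out the verification that $x\mapsto\nu(x,A)$ is measurable (via joint measurability and Fubini--Tonelli), a point the paper's proof leaves implicit; this is a welcome extra degree of care, not a different argument.
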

\begin{proof}
As $\wt T$ is a mapping from $E\times F$ to $E\times F$, 
there exist two measurable mappings:
$$
X\ :\ E\times F\longrightarrow E\qq\hbox{and}\qq Y\ :\ E\times
F\longrightarrow F\,, 
$$
such that 
$$
\wt T(x,y)=(X(x,y),Y(x,y))
$$
for all $(x,y)\in E\times F$.  

Let us compute the quantity
$Lf(x)$, with these notations. We have
\begin{align*}
Lf(x)&=\int_F T(f\otimes\indic)(x,y)\,d\mu(y)\\
&=\int_F (f\otimes\indic)(X(x,y),Y(x,y))\, d\mu(y)\\
&=\int_F f(X(x,y))\, d\mu(y)\,.
\end{align*}
Denote by $\n(x, dz)$ the probability measure on $E$, which is the image of $\mu$ by the function
$X(x, \,\cdot\,)$ (which goes from $F$ to $E$, for each fixed $x$). By a standard result from Measure Theory, the Transfer Theorem, we  get 
$$
Lf(x)=\int_{E}f(z)\, \n(x,dz)\,.
$$
Hence $L$ acts on $\rL^\infty(E)$ as the  Markov transition kernel 
$\n(x,dz)$.
\end{proof}

\medskip
Note the following important fact: \emph{the mapping $Y$ played no role at all in the proof
above}. 

\smallskip
Note that the Markov kernel $\n$ associated to $\wt T$ restricted to $E$ is given by
\begin{equation}\label{E:nuxA}
\n(x,A)=\mu\left(\{y\in F;\, X(x,y)\in A\}\right)\,.
\end{equation}
In particular, when $E$ is finite (or even countable), the transition kernel $\n$ is
associated to a Markovian matrix $P$ whose
coefficients are given by
$$
P(i,j)=\n(i,\{j\})=\mu(\{k; X(i,k)=j\})\,.
$$

\smallskip
What we have obtained here is important and deserves more
explanation. Mathematically, we have obtained a commuting diagram:
$$
\begin{matrix}
&T&\\
\rL^\infty(E\times F)&\longrightarrow&\rL^\infty(E\times F)\\\\
\otimes\indic\Big\uparrow&&\Big\downarrow\mu\\\\
\rL^\infty(E)&\longrightarrow&\rL^\infty(E)\,.\\
&L&
\end{matrix}
$$
In more physical language, what we have obtained here can be interpreted in two different ways. If we think of the dynamical system $\wt T$ first, we have emphasized the fact that losing the information of a deterministic dynamics on one of the components creates a random behavior on the other component. The randomness here appears only as a lack of knowledge of deterministic behavior on a larger world. A part of the universe interacting with our system $E$ is inaccessible to us (or at least we see a very small part of it: an average) which results in random behavior on $E$.

In the converse direction, that is, seen from the Markov kernel point of view, what we have obtained is a \emph{dilation} of a
Markov transition kernel into a dynamical system. Consider the
kernel $L$ on the state space $E$. It does not represent the dynamics
of a closed system, it is not a dynamical system. In order to see $L$ as coming from a true dynamical system, we have
enlarged the state space $E$ with an additional state space $F$, which
represents the environment. The dynamical system $\wt T$ represents the
true dynamics of the closed system ``$E$+environment''. Equation
\eqref{E:markov_dynam} says exactly that the effective
pseudo-dynamics $L$ that we have observed on $E$ is simply due to the
fact that we are looking only at a subpart of a true dynamical system
and an average of the $F$ part of the dynamics.

\medskip
These observations would be even more interesting if one could prove the
converse: \emph{every Markov transition kernel can be obtained this
way}. This is what we prove now, with only a very small restriction on $E$.

\smallskip
Recall that a Lusin space is a measurable space which is homeomorphic (as a measurable space) to a Borel subset of a compact metrisable space. This condition is satisfied for example by all the spaces $\RR^n$. 

\begin{theorem}\label{T:dilate_kernel}
Let $(E,\rE)$ be a Lusin space and $\nu$ a Markov kernel on $E$. Then there exists a measurable space $(F,\rF)$, a probability measure $\mu$ on $(F,\rF)$ and a dynamical system $\wt T$ on $E\times F$ such that the Markov kernel $L$ associated to the restriction of $\wt T$ to $E$ is equal to $\nu$.
\end{theorem}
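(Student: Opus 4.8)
The plan is to construct the dilation explicitly and then appeal to Theorem \ref{T:dynamical_markov}. By that theorem and formula \eqref{E:nuxA}, it is enough to produce a probability space $(F,\rF,\mu)$ together with a jointly measurable map $X:E\times F\rightarrow E$ such that, for every $x\in E$, the image of $\mu$ under $X(x,\,\cdot\,)$ equals $\nu(x,\,\cdot\,)$; one then simply sets $\wt T(x,y)=(X(x,y),y)$, the second component being irrelevant, as the note following Theorem~\ref{T:dynamical_markov} emphasizes. The canonical choice is $F=[0,1]$ with $\rF$ its Borel $\sigma$-field and $\mu$ the Lebesgue measure, so the whole problem reduces to realizing each probability measure $\nu(x,\,\cdot\,)$ as the image of Lebesgue measure by a map depending measurably on the extra parameter $x$.

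First I would treat the model case $E=\RR$ (or a Borel subset of $\RR$) by the inverse-distribution-function, or quantile, method. For each $x$ set $F_x(t)=\nu(x,(-\infty,t])$ and define
$$
X(x,u)=\inf\{t\in\RR:\ F_x(t)\geq u\}\,,\qq u\in(0,1)\,.
$$
A classical computation shows that, $x$ being fixed, $X(x,\,\cdot\,)$ pushes Lebesgue measure forward to $\nu(x,\,\cdot\,)$, so the required property on images holds automatically.

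The step I expect to be the real obstacle is the joint measurability of $X$ in the pair $(x,u)$: measurability in $u$ alone is classical, but here $X$ must be measurable on the product, and this is where the kernel hypothesis on $\nu$ enters. Using the right-continuity of $t\mapsto F_x(t)$ one has the identity
$$
\{(x,u):\ X(x,u)\leq t\}=\{(x,u):\ u\leq F_x(t)\}
$$
for every $t$, and the right-hand side is measurable because $x\mapsto F_x(t)=\nu(x,(-\infty,t])$ is measurable by property (i) of a Markov kernel while $u\mapsto u$ is trivially measurable. Since the intervals $(-\infty,t]$ generate the Borel $\sigma$-field of $\RR$, this yields the joint measurability of $X$.

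Finally I would remove the restriction $E=\RR$ by transport of structure. As $E$ is Lusin, Kuratowski's isomorphism theorem provides a Borel isomorphism $j$ of $E$ onto a Borel subset $\wt E\subseteq[0,1]$; pushing $\nu$ through $j$ yields a Markov kernel on $\wt E\subseteq\RR$ to which the quantile construction applies verbatim. The only mild care needed is that the quantile map may, for a Lebesgue-null set of values of $u$, take values in $[0,1]\sset\wt E$; since that null set carries no $\mu$-mass, one redefines $X(x,\,\cdot\,)$ to equal a fixed base point of $\wt E$ there, which alters neither the image measure nor the joint measurability. Transporting back through $j^{-1}$ produces the desired map $X:E\times[0,1]\rightarrow E$, and hence the dynamical system $\wt T(x,y)=(X(x,y),y)$, completing the construction.
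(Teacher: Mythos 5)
Your proof is correct, but it takes a genuinely different route from the paper. The paper's construction takes for the environment the huge function space $F=E^E$: it uses the Kolmogorov Consistency Theorem (this is where the Lusin hypothesis enters) to build a measure $\mu$ on $F$ under which the evaluations $y\mapsto y(x)$ are independent with respective laws $\nu(x,\,\cdot\,)$, and then sets $\wt T(x,y)=(y(x),y)$; the identity $X(x,y)=y(x)$ makes \eqref{E:nuxA} immediate. You instead take the smallest possible environment, $F=[0,1]$ with Lebesgue measure, realize each $\nu(x,\,\cdot\,)$ by the quantile (inverse-CDF) coupling, and invoke Kuratowski's Borel isomorphism theorem to reduce the general Lusin case to a Borel subset of $[0,1]$ --- so the Lusin hypothesis is consumed by descriptive set theory rather than by Kolmogorov extension. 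Your key technical point, the joint measurability of the quantile map via $\{(x,u):X(x,u)\leq t\}=\{(x,u):u\leq F_x(t)\}$ (right-continuity of $F_x$ plus kernel measurability of $x\mapsto F_x(t)$), is exactly right, and your handling of the null set where the quantile exits $\wt E$ is sound, since redefining $X(x,\,\cdot\,)$ on a Lebesgue-null set changes neither the image measure nor measurability. What each approach buys: yours produces a far more economical and concrete dilation --- a single uniform random variable on a standard probability space drives the whole kernel --- while the paper's function-space dilation avoids isomorphism theorems, works with $E$ ``as is'', and has a natural interpretation (the environment supplies a random map $E\to E$, in the spirit of iterated random function systems) that meshes directly with the repeated-interaction picture of Section \ref{SS:RI}. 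Both constructions, of course, only dilate $\nu$ as a single map; iterating either $\wt T$ does not dilate $\nu^n$, which is why Theorem \ref{T:dilate_markov} then passes to the product $F^{\NN^*}$ --- a step that works equally well starting from your $F=[0,1]$.
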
 

\begin{proof}
Let $\nu(x, dz)$ be a Markov kernel on $(E,\rE)$.  Let $F$ be the set of 
functions from $E$ to $E$. For every finite subset
$\s=\{x_1,\ldots,x_n\}\subset E$ and every $A_1, \ldots, A_n\in\rE$ consider the set
$$
F(x_1,\ldots, x_n\,;\, A_1,\ldots,A_n)=\left\{y\in F\,;\ y(x_1)\in
A_1,\ldots,y(x_n)\in A_n\right\}\,.
$$
By the Kolmogorov Consistency Theorem (which applies for $E$ is is a Lusin space!) there exists a
unique probability measure $\mu$ on $F$ such that
$$
\mu\left(F(x_1,\ldots, x_n; A_1,\ldots,A_n)\right)=\prod_{i=1}^n \nu(x_i,
A_i)\,.
$$
Indeed, it is easy to check that the above formula defines a
consistent family of probability measures on the finitely-based
cylinders of $F$, then apply
Kolmogorov's Theorem.

\smallskip
Now define the dynamical system
$$
\begin{matrix}
\wt T\ :\ &E\times F&\longrightarrow&E\times F\\
&(x,y)&\longmapsto&(y(x),y)\,.
\end{matrix}
$$

With the same notations as in the proof of Theorem \ref{T:dynamical_markov},
we have $X(x,y)=y(x)$ in this particular case and hence
$$
\mu(\{y\in F\,;\ X(x,y)\in A\})=\mu(\{y\in F\,;\ y(x)\in A\})=\nu(x,A)\,.
$$
This proves our claim by (\ref{E:nuxA}).\end{proof}

\medskip
Note that in this dilation of $L$, the dynamical system $T$ has no reason to
be invertible in general. It is worth noticing that one can construct a dilation where $T$ is invertible. 

\begin{proposition}
Every Markov kernel $\n$, on a Lusin space $E$, admits a dilation $\wt T$ which is an invertible dynamical system.
\end{proposition}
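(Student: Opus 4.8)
\section*{Proof proposal}

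The plan is to produce an invertible dilation in its simplest possible shape: take the environment to be $F=[0,1]$ and $\mu$ the Lebesgue measure, and build a single bi-measurable \emph{bijection} $\wt T$ of $E\times[0,1]$ whose reduction is $\n$. Writing $\wt T(x,u)=(X(x,u),V(x,u))$ with the notation of the proof of Theorem \ref{T:dynamical_markov}, the reduction formula \eqref{E:markov_dynam} collapses to $Lf(x)=\int_0^1 f(X(x,u))\,du$, so the only property required of the first component is that, for each fixed $x$, the image of Lebesgue measure under $u\mapsto X(x,u)$ be exactly $\n(x,\cdot)$. If in addition $\wt T$ is a bijection of $E\times[0,1]$, then it is an invertible dynamical system and, by the computation above, its reduction $L$ is the Markov operator of $\n$, which is what we want.

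First I would fix the forward component $X$. Since $E$ is Lusin, hence standard Borel, a Markov kernel always admits a jointly measurable representation $X\colon E\times[0,1]\to E$ with $X(x,\cdot)_*\,du=\n(x,\cdot)$ for every $x$ (the measurable inverse-distribution-function construction after a Borel embedding $E\hookrightarrow[0,1]$; this is the same fact that underlies Theorem \ref{T:dilate_kernel}). The remaining and real task is to choose $V$ so that $\wt T=(X,V)$ becomes a bijection. For each $z\in E$ consider the fibre $E_z=\{(x,u)\in E\times[0,1]:X(x,u)=z\}$; the family $(E_z)_{z\in E}$ is a measurable partition of $E\times[0,1]$, and a map with first component $X$ is a bijection precisely when it sends each $E_z$ bijectively onto the column $\{z\}\times[0,1]$. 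As $E$ is standard Borel, every $E_z$ and every column is a standard Borel space, so the Borel isomorphism theorem provides, for each $z$, an isomorphism $\Phi_z\colon E_z\to\{z\}\times[0,1]$; setting $\wt T|_{E_z}=\Phi_z$ (equivalently letting $V$ be the second coordinate of $\Phi_z$) produces a bijection of $E\times[0,1]$ while leaving $X$, and therefore the reduction, untouched.

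The main obstacle is exactly this global bijectivity, i.e.\ surjectivity, because $u\mapsto X(x,u)$ is in general very far from injective and, worse, some states $z$ may carry no incoming mass at all ($\n(x,\{z\})=0$ for every $x$): for such $z$ the fibre $E_z$ is Lebesgue-null and may even be countable, so it cannot be put in Borel bijection with the uncountable column $\{z\}\times[0,1]$. To repair this I would, before reshuffling, alter $X$ on a Lebesgue-null set: fix an uncountable null Borel set $C\subset[0,1]$ (a Cantor set) and redefine $X$ on $E\times C$, via a Borel isomorphism of $E\times C$ with the relevant part of $E$, so that each under-covered column receives an uncountable fibre. Since $C$ is null this changes none of the laws $X(x,\cdot)_*\,du$, so the reduction is still $\n$, while now every $E_z$ is an uncountable standard Borel space and the isomorphisms $\Phi_z$ of the previous step exist. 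Verifying that all these choices can be made jointly measurable in $z$, so that $\wt T$ and $\wt T^{-1}$ are genuinely bi-measurable, is the delicate technical heart of the argument, and it is precisely here that the Lusin hypothesis is used, through the measurable form of the isomorphism theorem. A conceptually different route would be to pass to the natural extension of the non-invertible dilation of Theorem \ref{T:dilate_kernel}, which is automatically invertible; there the difficulty is dual, since the space of consistent histories is not literally a product $E\times F$ and must be recast into the form demanded by \eqref{E:markov_dynam}.
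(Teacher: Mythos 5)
Your overall strategy---keep the first component $X$ (which alone determines the reduction, as the paper notes right after Theorem \ref{T:dynamical_markov}) and rewire the second component to force bijectivity---is reasonable, but the proof has a genuine gap at exactly the point you yourself flag as its ``delicate technical heart.'' The Borel isomorphism theorem gives you, for each fixed $z$, an isomorphism $\Phi_z\colon E_z\to\{z\}\times[0,1]$, but it gives these isomorphisms one at a time: gluing an uncountable family of separately chosen $\Phi_z$'s produces a map about which nothing measurable can be asserted. To make the glued map a Borel bijection you need the family $(\Phi_z)_{z\in E}$ to be chosen Borel-measurably in $z$, i.e.\ you need a Borel parametrization of the Borel set $\{(z,(x,u))\,:\,X(x,u)=z\}$, all of whose sections have been made uncountable by your null-set surgery. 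Such a parametrization theorem does exist (it is a theorem of Wesley and Mauldin, proved by forcing or by effective descriptive set theory), but it is a deep result, it is not quoted in your argument, and nothing in your sketch substitutes for it; the preliminary surgery step (redefining $X$ on $E\times C$ ``via a Borel isomorphism of $E\times C$ with the relevant part of $E$'') is also left vague, though that part is repairable. As written, the proof stops exactly where the difficulty begins.

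The paper's own proof shows that none of this machinery is needed, because it does not insist on a fixed, minimal environment: it enlarges the environment of Theorem \ref{T:dilate_kernel} from $F$ to $F'=E\times F$, stores the incoming system state in the new $E$-slot, and takes as first component the transposition of $E$ exchanging $x_0$ and $y(x)$; that is, $\wt T'(x,(z,y))=(\sigma_{x,y}(z),(x,y))$, where $\sigma_{x,y}$ swaps the fixed point $x_0$ with $y(x)$ and fixes everything else. This map is an explicit bijection (the input $x$ is recorded in the output and $\sigma_{x,y}$ is an involution), and since the measure $\delta_{x_0}\otimes\mu$ charges only the slice $\{x_0\}\times F$, the reduced kernel is still $\nu$. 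The moral is that invertibility costs nothing if you allow the environment to grow and concentrate the measure on a thin slice of it, whereas forcing invertibility over a prescribed environment such as $[0,1]$ with Lebesgue measure requires precisely the heavy parametrization theorems your sketch would depend on.
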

\begin{proof}
Consider the construction and notations of Theorem \ref{T:dilate_kernel}. Consider the space $F'=E\times F$. Let $x_0$ be a fixed element of $E$ and define the mapping $\wt T'$ on $E\times F'$ by 
$$
\begin{cases}
\wt T'(x,(x_0,y))=(y(x), (x, y)),&\\
\wt T'(x,(y(x),y))=(x_0, (x, y)),&\\
\wt T'(x,(z,y))=(z, (x, y)),& \mbox{\ if } z\not = x_0\mbox{\ and }z\not = y(x)\,.
\end{cases}
$$
It is easy to check that $\wt T'$ is a bijection of $E\times F'$. Now extend the measure $\mu$ on $F$ to the measure
$\delta_{x_0}\otimes \mu$ on $F'$. Then the dynamical system $\wt T'$ is
invertible and dilates the 
same Markov kernel as $\wt T$.
\end{proof}

\subsection{Iterating the Dynamical System}\label{SS:RI}

We have shown that every dynamical system on a product set gives
rise to a Markov kernel when restricted to one of the sets. We have
seen that every Markov kernel can be obtained this way.
But one has to notice that our construction allows the dynamical
system $\wt T$ to dilate  the Markov
kernel $L$ as a single mapping only. That is, iterations of the
dynamical system $T^n$ do not in general dilate the semigroup $L^n$
associated to the Markov process. Let us check this with a simple
counter-example.

Put $E=F=\{1,2\}$. On $F$ define the probability measure $\mu(1)=1/4$
and $\mu(2)=3/4$. Define the dynamical system $\wt T$ on $E\times F$
which is the ``anticlockwise rotation'':
$$
\wt T(1,1)=(2,1),\qq \wt T(2,1)=(2,2),\qq \wt T(2,2)=(1,2),\qq \wt T(1,2)=(1,1)\,.
$$
With the same notations as in previous section, we have
$$
X(1,1)=2,\qq X(2,1)=2,\qq X(2,2)=1,\qq X(1,2)=1\,.
$$
Hence, we get
\begin{align*}
\mu(X(1,\,\cdot\,)=1)&=\frac 34\,,\qq
\mu(X(1,\,\cdot\,)=2)=\frac 14\,,\\
\ecarte \mu(X(2,\,\cdot\,)=1)&=\frac 34\,,\qq
\mu(X(2,\,\cdot\,)=2)=\frac 14\,.
\end{align*}
Hence the Markovian matrix associated to the restriction of $\wt T$ to
$E$ is 
$$
L=\left(\begin{matrix} \frac 34\,&\,\frac
14\\\ecarte\frac34\,&\,\frac14\end{matrix}\right)\,.
$$
In particular
$$
L^2=L\,.
$$
Let us compute ${\wt T}^2$. We get 
$$
{\wt T}^2(1,1)=(2,2),\qq {\wt T}^2(2,1)=(1,2),\qq {\wt T}^2(2,2)=(1,1),\qq {\wt T}^2(1,2)=(2,1)\,.
$$
Hence the associated $X$-mapping, which we shall denote by $X_2$, is given
by
$$
X_2(1,1)=2,\qq X_2(2,1)=1,\qq X_2(2,2)=1,\qq X_2(1,2)=2\,.
$$
This gives the Markovian matrix
$$
L_2=\left(\begin{matrix} 0\,&\,1\\\ecarte1\,&\,0\end{matrix}\right)\,,
$$
which is clearly not equal to $L^2$. 

\bigskip
It would be very interesting if one could find a dilation of the
Markov kernel $L$ by a dynamical system $\wt T$ such that any power
$T^n$ would also dilate $L^n$. We would have realized the whole
Markov chain as the restriction of iterations of a single dynamical
system on a larger space. 

This can be performed in the following way (note that this is not the
only way, nor the more economical). Let $L$ be a Markov operator on a Lusin space $E$ with kernel $\nu$ and let $T$ be a dynamical system on
$E\times F$ which dilates $L$. Consider the set
$\wh{F}=F^{\NN^*}$ equipped with the usual cylinder $\s$-field
$\rF^{\otimes\NN^*}$ and the product measure 
$\wh{\mu}=\mu^{\otimes\NN^*}$. The elements of $\wh F$ are sequences
${(y_n)}_{n\in\NN^*}$ in $F$. Put 
$$
\begin{matrix}
\wt{S}&:&E\times \wh F&\longrightarrow&E\times\wh F\\
&&(x,y)&\longmapsto&(X(x,y_1),\Theta(y))
\end{matrix}
$$
where $X$ is as in the the proof of Theorem \ref{T:dynamical_markov} and $\Theta$ is the usual \emph{shift} on $\wh F$:
$\Theta(y)={(y_{n+1})}_{n\in\NN^*}$. 

Then $\wt{S}$ can be lifted into a morphism $S$ of
$\rL^\infty(E\times\wh F)$, as previously. Furthermore, any function $f$ in
$\rL^\infty(E)$ can be lifted into $f\otimes \indic$ on
$\rL^\infty(E\times\wh F)$, with
$(f\otimes\indic)(x,y)=f(x)$. 

\begin{theorem}\label{T:dilate_markov}
For all $n\in\NN^*$, all $x\in E$ and all $f\in\rL^\infty(E)$ we have
$$
\int_{\wh F}{S}^n(f\otimes\indic)(x,y)\, d\wh\mu(y)=(L^n
f)(x)\,.
$$
\end{theorem}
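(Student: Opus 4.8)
The plan is to compute the iterate $\wt S^n$ explicitly, then exploit the product structure of $\wh\mu$ to collapse the integral over $\wh F$ to an $n$-fold integral over $F$, and finally conclude by induction on $n$ using the one-step dilation property of $\wt T$.

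First I would unwind the definition of $\wt S$. Writing $y=(y_k)_{k\in\NN^*}$ and setting $x_0=x$ and $x_k=X(x_{k-1},y_k)$ for $k\ge 1$, a direct induction on $n$ gives
$$
\wt S^n(x,y)=(x_n,\Theta^n y),
$$
because each application of $\wt S$ uses the first coordinate of the currently shifted sequence (namely $(\Theta^{k} y)_1=y_{k+1}$) and shifts once more. Thus $x_n=X(X(\cdots X(X(x,y_1),y_2)\cdots),y_n)$ is a measurable function of $x$ and of the finitely many coordinates $y_1,\ldots,y_n$ only. Since $(f\otimes\indic)(u,v)=f(u)$ ignores the second variable, it follows that
$$
S^n(f\otimes\indic)(x,y)=(f\otimes\indic)(\wt S^n(x,y))=f(x_n).
$$

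Next, because $\wh\mu=\mu^{\otimes\NN^*}$ is a product measure and $f(x_n)$ is a bounded measurable function of $(y_1,\ldots,y_n)$ alone, Tonelli's theorem allows me to integrate out all coordinates $y_{n+1},y_{n+2},\ldots$, each contributing a factor $\mu(F)=1$, so that
$$
\int_{\wh F}S^n(f\otimes\indic)(x,y)\,d\wh\mu(y)=\int_{F^n}f(x_n)\,d\mu(y_1)\cdots d\mu(y_n).
$$
It then remains to show that the right-hand side equals $(L^n f)(x)$, which I would prove by induction on $n$. The base case $n=1$ is exactly the single-step dilation identity $\int_F f(X(x,y_1))\,d\mu(y_1)=Lf(x)$ furnished by Theorem \ref{T:dynamical_markov}. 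For the inductive step I would peel off the outermost integration: freezing $y_1$, the point $x_1=X(x,y_1)$ serves as a new starting point, and the inner integral over $y_2,\ldots,y_{n+1}$ is precisely the $n$-fold integral attached to starting point $x_1$; by the induction hypothesis it equals $(L^n f)(x_1)=(L^n f)(X(x,y_1))$. Integrating this over $y_1$ and invoking the base case once more gives $L(L^n f)(x)=(L^{n+1}f)(x)$.

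The only genuinely delicate point is the bookkeeping in the inductive step: one must verify that after fixing $y_1$ the residual composition $x_{n+1}=X(\cdots X(x_1,y_2)\cdots,y_{n+1})$ has exactly the same form as the $n$-step composition started at $x_1$, so that the induction hypothesis applies verbatim after relabelling $y_2,\ldots,y_{n+1}$ as $y_1,\ldots,y_n$. This is precisely the Markov (semigroup) property being reconstructed from the shift $\Theta$, and it is what makes the powers $S^n$ dilate $L^n$, in contrast to the counterexample of the previous subsection where a single frozen environment cannot reproduce $L^2$. The measurability of $x_n$ in all its arguments, which legitimizes the use of Tonelli's theorem, is a routine consequence of the measurability of $X$.
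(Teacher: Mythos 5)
Your proof is correct and follows essentially the same route as the paper's: an induction on $n$ in which the first environment coordinate $y_1$ is peeled off via the product structure of $\wh\mu$, the induction hypothesis is applied at the new starting point $X(x,y_1)$, and the one-step identity $\int_F f(X(x,y_1))\,d\mu(y_1)=Lf(x)$ closes the argument. The only cosmetic difference is that you first compute $\wt S^n$ explicitly and collapse the integral to $F^n$ by Tonelli before inducting, whereas the paper runs the induction directly on the integral over $\wh F$, splitting $\wh\mu=\mu\otimes\wh\mu_{[2}$ at each step; the two bookkeeping schemes are equivalent.
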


\begin{proof}
Recall that we noticed in the proof of Theorem
\ref{T:dynamical_markov}, that the mapping $Y$ associated to $\wt T$
played no role in the proof of this theorem, only the mapping $X$ was
of importance. In particular this
implies that Theorem \ref{T:dilate_markov} is true for $n=1$, for the
dynamical systems $\wt T$ and $\wt{S}$ share the same
$X$-mapping. 

By induction, let us assume that the relation
$$
\int_{\wh F}{S}^k(f\otimes \indic)(x,y)\,d\wh\mu(y)=(L^kf)(x)
$$
holds true for all $f\in\rL^\infty(E)$, all $x\in E$ and all $k\leq
n$. Set $\wh{F}_{[2}$ to be the set of sequences ${(y_n)}_{n\geq
2}$ with values in $\wh F$ and $\wh \mu_{[2}$ the restriction of $\wh \mu$ to  $\wh{F}_{[2}$.  We have
\begin{align*}
\int_{\wh F}&{S}^{n+1}(f\otimes \indic)(x,y)\,d\wh\mu(y)=\\
&=\int_{\wh F} {S}^{n}(f\otimes
\indic)\left(X(x,y_1),\Theta(y)\right)\, d\wh \mu(y)\\
&=\int_F\int_{\wh{F}_{[2}}{S}^{n}(f\otimes
\indic)\left(X(x,y_1),y\right)\, d\wh \mu_{[2}(y)\,
d\mu(y_1)\,.
\end{align*}
Put $\wt x=X(x,y_1)$, the above is equal to
\begin{align*}
\ \ \ &\int_F\int_{\wh{F}_{[2}}{S}^{n}(f\otimes
\indic)\left(\wt x,y\right)\, d\wh \mu_{[2}(y)\,
d\mu(y_1)\\
&=\int_F L^n(f)(\wt x)\, d\mu(y_1)\qq\hbox{\ 
(by induction hypothesis)}\\
&=\int_F L^n(f)(X(x,y_1))\, d\mu(y_1)\\
&=L^{n+1}(f)(x)\,.\qq
\end{align*}
\end{proof}

With this theorem and with Theorem \ref{T:dilate_kernel}, we see that every
Markov chain on $E$ can realized as the restriction on $E$ of the
iterations of a deterministic dynamical system $\wt T$ acting on a
larger set. 

\smallskip
The physical interpretation of the construction above is very interesting. It represents a scheme of ``repeated interactions". That is, we know that the result of the deterministic dynamics associated to $\wt T$ on $E\times F$ gives rises to the Markov operator $L$ on $E$. The idea of the construction above is that the environment is now made of a chain of copies of $F$, each of which is going to interact, one after the other, with $E$. After, the first interaction between $E$ and the first copy of $F$ has happened, following the dynamical system $\wt T$, the first copy of $F$ stops interacting with $E$ and is replaced by the second copy of $F$. This copy now interacts with $E$ following $\wt T$. And so on, we repeat these interactions. The space $E$ keeps the memory of the different interactions, while each copy of $F$ arrives independently  in front of $E$ and induces one more step of evolution following $\wt T$. 

As a result of this procedure, successive evolutions restricted to $E$ correspond to iterations of the Markov operator $L$. This gives rise to behavior as claimed: an entire path of the homogeneous Markov chain with generator $L$. 

\subsection{Defect of Determinism and Loss of Invertibility}\label{SS:defect}

We end up this section with some algebraic characterizations of determinism
for Markov chains. The point is to characterize what exactly is lost when going from the deterministic dynamics $T$ on $E\times F$ to the Markov operator $L$ on $E$. 

\begin{theorem}\label{T:deterministic}
Let $(E,\rE)$ be a Lusin space. Let
$\seq X$ be a Markov chain with state space $(E,\rE)$ and with
transition kernel  
$\nu$. Let $L$ be the Markov operator on $\rL^\infty(E)$ associated to $\nu$:
$$
Lf(x)=\int_E f(y)\, \nu(x, dy)\,.
$$
Then the Markov chain $\seq X$ is deterministic if and only if $L$ is a
$*$-homomorphism of the algebra $\rL^\infty(E)$. 
\end{theorem}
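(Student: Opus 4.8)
The plan is to prove the two implications separately. The forward direction is immediate from the material already developed, while the converse reduces, after one easy algebraic observation, to a single measure-theoretic fact that is exactly where the Lusin hypothesis is needed.

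\emph{Deterministic $\Rightarrow$ homomorphism.} If $\seq X$ is deterministic then by the definition recalled above $\nu(x,dy)=\delta_{\wt T(x)}(dy)$ for some measurable $\wt T:E\to E$, whence $Lf(x)=\int_E f(y)\,\delta_{\wt T(x)}(dy)=f(\wt T(x))$. Thus $L$ coincides with the composition operator $T$ of Proposition \ref{P:dynamical}, which is a $*$-homomorphism of $\rL^\infty(E)$; this direction requires nothing further.

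\emph{Homomorphism $\Rightarrow$ deterministic.} The only property of $L$ I would use is multiplicativity, $L(fg)=L(f)L(g)$. Applying this to $f=g=\indic_A$ with $A\in\rE$ and using $\indic_A^2=\indic_A$ yields $\nu(x,A)=L(\indic_A)(x)=\bigl(L(\indic_A)(x)\bigr)^2=\nu(x,A)^2$ for every $x\in E$ and every $A\in\rE$. Hence $\nu(x,A)\in\{0,1\}$ for all $A$, i.e. for each fixed $x$ the probability measure $m_x:=\nu(x,\cdot)$ is two-valued. The remaining task is purely measure-theoretic: to show that a $\{0,1\}$-valued probability measure on a Lusin space is a Dirac mass.

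For this last step, realize $E$ as a separable metric space (possible since $E$ is Lusin). For each $n$ cover $E$ by countably many balls of radius $1/n$; by countable additivity their measures cannot all vanish, and since $m_x$ is two-valued at least one ball $B_n$ satisfies $m_x(B_n)=1$. Setting $C_n=B_1\cap\cdots\cap B_n$ gives a decreasing sequence with $m_x(C_n)=1$ and $\mathrm{diam}(C_n)\le 2/n\to 0$. Continuity from above gives $m_x\bigl(\bigcap_n C_n\bigr)=1$, so the intersection is nonempty, and having diameter $0$ it is a single point, which I denote $\wt T(x)$; therefore $m_x=\delta_{\wt T(x)}$. Measurability of the resulting map is then automatic: for $A\in\rE$ one has $\{x:\wt T(x)\in A\}=\{x:\nu(x,A)=1\}$, which lies in $\rE$ because $x\mapsto\nu(x,A)$ is measurable by the kernel axioms. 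Hence $\nu(x,dy)=\delta_{\wt T(x)}(dy)$ with $\wt T$ measurable, so the chain is deterministic. I expect the shrinking-ball passage from ``two-valued'' to ``Dirac'' to be the only genuinely nontrivial point, and it is precisely here that separability/metrizability of a Lusin space is indispensable: on a pathological measurable space a $\{0,1\}$-valued measure need not be concentrated at a point.
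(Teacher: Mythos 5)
Your proof is correct and follows essentially the same route as the paper: multiplicativity applied to indicator functions forces $\nu(x,\cdot)$ to be $\{0,1\}$-valued, and then shrinking full-measure sets of vanishing diameter in a separable metric realization of the Lusin space identify $\nu(x,\cdot)$ as a Dirac mass. The only cosmetic differences are that you intersect full-measure balls where the paper extracts nested measurable partitions and picks the unique full-measure cell in each, and that you additionally verify measurability of $x\mapsto\wt T(x)$, a point the paper leaves implicit.
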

\begin{proof}
If the Markov chain is deterministic, then $L$ is associated to a dynamical system and hence it is a $*$-homomorphism (Proposition \ref{P:dynamical}).

\smallskip
Conversely, suppose that $L$ is a $*$-homomorphism. We shall first consider the case where $(E,\rE)$ is a Borel subset of a compact metric space. 

Take any
$A\in\rE$, any $x\in E$ and recall that we always have
$$
\nu(x,A)=L(\indic_A)(x)\,.
$$
The homomorphism property gives
$$
L(\indic_A)(x)=L(\indic_A^2)(x)=L(\indic_A)^2(x)=\nu(x,A)^2\,.
$$
Hence $\nu(x,A)$ satisfies $\nu(x,A)^2=\nu(x,A)$. This means that $\nu(x,A)$
is equal to 0 or 1, for all $x\in E$ and all $A\in \rE$.

\smallskip
Consider a covering of $E$ with a countable family of balls $(B_i)_{i\in\NN}$, each of which with diameter smaller than $2^{-n}$ (this is always possible as $E$ is separable). From this covering one can easily extract a partition $(S_i)_{i\in\NN}$ of $E$ by measurable sets, each of which with diameter smaller than $2^{-n}$. We shall denote by $\rS^n$ this partition. 

Let $x\in E$ be fixed. As we have $\sum_{E\in\rS^n} \nu(x, E)=1$ we
must have $\nu(x,E)=1$ for one and only one $E\in\rS^n$. Let us denote by
$E^{(n)}(x)$ this unique set. Clearly, the sequence
${(E^{(n)}(x))}_{n\in\NN}$ is decreasing (for otherwise there will be
more than one set $E\in\rS^n$ such that $\nu(x, E)=1$). Let $A=\cap_n
E^{(n)}(x)$. The set $A$ satisfies $\nu(x, A)=1$, hence $A$ is non-empty. But also, the diameter of $A$ has to be 0, for it is smaller than $2^{-n}$ for all $n$. As a consequence $A$ has to be a singleton
$\{y(x)\}$, for some $y(x)\in 
E$. Hence we have
proved that for each $x\in E$ there exists a $y(x)\in E$ such that
$n(x, \{y(x)\})=1$. This proves
the deterministic character of our chain.  

\smallskip
The case where $E$ is only homeomorphic to a Borel subset $E'$ of a compact metric space is obtained by using the homeomorphism to transfer suitable partitions $\rS^n$ of $E'$ to $E$. 
\end{proof}

\smallskip
The result above is quite amazing. It gives such a clear and neat characterization of the difference between a true Markov operator and a deterministic one! One can even think of several applications of this characterization, for example one may be able to measure the ``level of randomness" of some Markov operator by evaluating for example
$$
\sup\{\norme{T(f^2)-T(f)^2};\, f\in\rL^\infty (E,\rE), \norme{f}=1\}\,.
$$
I do not know if such things have already been studied or not. It is not my purpose here to develop this idea, I just mention it.
\eject
Another strong result on determinism of Markov chains is the way it is
related to non-invertibility. 

\begin{theorem}\label{T:invertible_Markov}
Let $(E,\rE)$ be a Lusin space. 
Let $L$ be a Markov operator on $\rL^\infty(E)$
associated to a Markov chain $\seq X$. If $L$ is invertible in the category of Markov operators then $\seq X$ is deterministic.
\end{theorem}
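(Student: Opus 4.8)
The plan is to reduce everything to Theorem \ref{T:deterministic}: by that result, to prove that $\seq X$ is deterministic it suffices to show that the invertibility of $L$ in the category of Markov operators forces $L$ to be a $*$-homomorphism of $\rL^\infty(E)$, i.e.\ that $L(f^2)=(Lf)^2$ for every (real) $f$. The single tool I would use is the Schwarz inequality enjoyed by any Markov operator. So the first step is to record that every Markov operator is positive and unital and satisfies, pointwise, $(Lf)^2\leq L(f^2)$: for fixed $x$ the map $f\mapsto Lf(x)=\int_E f\, d\nu(x,\,\cdot\,)$ is integration against a probability measure, so the Cauchy--Schwarz (or Jensen) inequality gives $\left(\int_E f\, d\nu(x,\,\cdot\,)\right)^2\leq \int_E f^2\, d\nu(x,\,\cdot\,)$.

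Next, let $M$ be the inverse of $L$ in the category of Markov operators, so that $M$ is itself a Markov operator (in particular positive and unital) with $LM=ML=\mathrm{Id}$. The key step is a squeezing argument. Starting from the Schwarz inequality for $M$, namely $(Mg)^2\leq M(g^2)$, I apply the positive unital operator $L$ to obtain $L\!\left((Mg)^2\right)\leq L\!\left(M(g^2)\right)=g^2$. On the other hand, the Schwarz inequality for $L$ applied to the function $Mg$ reads $(L(Mg))^2\leq L\!\left((Mg)^2\right)$, that is $g^2\leq L\!\left((Mg)^2\right)$. Combining the two forces the equality $L\!\left((Mg)^2\right)=g^2=(L(Mg))^2$ for every $g\in\rL^\infty(E)$. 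Since $ML=\mathrm{Id}$, the operator $M$ is surjective onto $\rL^\infty(E)$, so every $h$ is of the form $Mg$; hence $L(h^2)=(Lh)^2$ for all $h\in\rL^\infty(E)$.

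Finally, polarizing this identity (expand $L\!\left((h+k)^2\right)=(L(h+k))^2$ and subtract $L(h^2)=(Lh)^2$ and $L(k^2)=(Lk)^2$) yields $L(hk)=(Lh)(Lk)$ for all $h,k$, so $L$ is a $*$-homomorphism. Theorem \ref{T:deterministic} then gives at once that $\seq X$ is deterministic.

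I expect the only delicate point to be the legitimacy of the squeeze: one must check that the two Schwarz inequalities hold pointwise (a.e.), that applying the positive operators $L$ and $M$ preserves these inequalities, and that the relation $ML=\mathrm{Id}$ really delivers the surjectivity of $M$ that lets $h=Mg$ range over all of $\rL^\infty(E)$. Everything else—positivity, unitality, and the polarization identity—is routine.
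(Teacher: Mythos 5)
Your proposal is correct and is essentially the paper's own argument: both proofs squeeze via the Schwarz inequality for $L$ and for its Markov inverse $M$, extract an equality, and then conclude multiplicativity of $L$ by polarization and determinism by Theorem \ref{T:deterministic}. The only cosmetic difference is that the paper sandwiches $\bar f f = M\circ L(\bar f f)$ and applies $L$ at the end to cancel $M$, whereas you sandwich $g^2 = L\circ M(g^2)$ and instead invoke the surjectivity of $M$ (from $ML=\mathrm{Id}$) to let $h=Mg$ range over all of $\rL^\infty(E)$ --- two equivalent bookkeeping choices for the same squeeze.
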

\begin{proof}
Recall that a Markov operator $L$ maps positive functions to positive
functions. Hence, in the same way as one proves Cauchy-Schwarz
inequality, we always have
$$
\overline{L(f)}=L(\overline{f})
$$
and
$$
L(\ab{f}^2)\geq L(\bar f)L(f)
$$
(hint: write the positivity of $T(\overline{(f+\l g)}(f+\l g))$ for
all $\l\in\CC$).

\smallskip
Let $M$ be a Markov operator such that $ML=LM=I$. 
We have
$$
\ab{f}^2=\bar f f=M\circ L(\bar f f)\geq M(L(\bar f)L(f))\geq M\circ
L(\bar f)\,M\circ L(f)=\bar f f=\ab{f}^2\,.
$$
Hence we have equalities everywhere above. In particular
$$
M\circ L(\bar f f)=M(L(\bar f)L(f))\,.
$$
Applying $L$ to this equality, gives
$$
L(\bar f f)=L(\bar f)L(f)\,,
$$
for all $f\in\rL^\infty(E)$. 

By polarization it is easy to prove now that $L$ is a homomorphism. By  Theorem
\ref{T:deterministic} it is the Markov
operator associated to a deterministic chain. 
\end{proof}

\smallskip
The result above is more intuitive than the one of Theorem \ref{T:deterministic}, from the point of view of open systems. If the dynamical system $\wt T$ on the large space $E\times F$ is invertible, this invertibility is always lost when projecting on $E$. The fact we do not have access to one component of the coupled system makes that we lose all chance of invertibility. 

\section{Continuous Time}\label{S:SDE_dynamical}

We now leave the discrete-time setup to concentrate on continuous-time dynamical systems. We aim to show that stochastic differential equations are actually a particular kind of continuous-time dynamical systems. In particular they are ``deterministic". The type of dynamical system we shall obtain this way is a continuous-time version of the construction of Theorem \ref{T:dilate_markov}. 

\subsection{Preliminaries}

Let us consider the $d$-dimensional Brownian motion $W$ on its canonical space
$(\O,\rF,\PP)$. This is to say that $\O=C_0(\Rp;\RR^d)$ is the space of continuous functions on $\Rp$ with values in $\RR^d$ and which vanish at 0, equiped with the topology of uniform convergence on compact sets, the $\s$-field $\rF$ is Borel $\s$-field of $\O$ and the measure $\PP$ is the Wiener measure, that is, the law of a $d$-dimensional Brownian motion on $\O$.  The canonical Brownian motion $(W_t)$ is defined by $ W_t(\o)=\o(t)$, for all $\o\in\O$ and all $t\in\Rp$. This is to say, coordinate-wise: $W^i_t(\o)=\o_i(t)$, for $i=1,\ldots, d$. 

\smallskip
We define for all $s\in\Rp$ the \emph{shift} $\theta_s$ 
 as a function from $\O$ to $\O$ by
$$
\theta_s(\o)(t)=\o(t+s)-\o(s)\,.
$$
We define the \emph{shift operator} $\Theta_s$ as follows. If $X$ is any random variable on $\O$ we denote by $\Theta_s(X)$ the random variable $X\circ \theta_s$, \emph{whatever is the state space of }$X$. 
In particular we have $\Theta_s(W_t)=W_{t+s}-W_s\,$.

As the process $Y_t=W_{t+s}-W_s$, $t\in\Rp$, is again a $d$-dimensional Brownian
motion, this implies that the mapping $\theta_s$ preserves the measure
$\PP$. As a consequence $\Theta_s$ is an isometry of $L^2((\O,\rF,\PP);\RR^d)$. 

\begin{lemma}\label{L:predict_theta}
If $H$ is a predictable process in $\RR^d$, then, for all fixed $s\in\Rp$,  the process
$K_t=\Theta_s(H_{t-s})$, $t\geq s$ is also predictable.
\end{lemma}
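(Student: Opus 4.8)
The plan is to run a functional monotone-class argument, reducing the statement to elementary predictable processes, for which the shift can be computed explicitly. Recall that the predictable $\s$-field $\rP$ on $\Rp\times\O$ is generated by the elementary processes of the form $H_t(\o)=\xi(\o)\,\indic_{(a,b]}(t)$, where $0\le a<b$ and $\xi$ is bounded and $\rF_a$-measurable (together with the processes supported at the single time $0$). Since $\Theta_s$ acts by composition with $\theta_s$, which is a pointwise operation on $\O$, the class of bounded processes $H$ for which the conclusion holds is a vector space containing the constants and stable under bounded pointwise limits; this family of elementary processes is also stable under products and generates $\rP$. Hence it suffices to verify the claim on the generating elementary processes and then invoke the monotone-class theorem, extending finally to unbounded (and $\RR^d$-valued, coordinatewise) $H$ by truncation.

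The key step, and the only place where the structure of the Brownian shift enters, is the following compatibility between $\theta_s$ and the filtration:
$$
\theta_s^{-1}(\rF_a)\subseteq \rF_{a+s}\qquad\text{for all }a\ge 0.
$$
First I would establish this from the identity $W_u\circ\theta_s=W_{u+s}-W_s$, which is immediate from the definition $\theta_s(\o)(t)=\o(t+s)-\o(s)$. Indeed $\rF_a=\s(W_u;\,u\le a)$ is generated by the sets $\{W_u\in B\}$ with $u\le a$ and $B$ Borel, and
$$
\theta_s^{-1}\{W_u\in B\}=\{W_{u+s}-W_s\in B\}\in\rF_{u+s}\subseteq\rF_{a+s}.
$$
As $\theta_s^{-1}$ commutes with countable set operations, the inclusion follows on the raw filtration; and since $\theta_s$ preserves $\PP$, null sets pull back to null sets, so the inclusion persists after the usual augmentation. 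Equivalently, $\Theta_s$ maps $\rF_a$-measurable random variables to $\rF_{a+s}$-measurable ones.

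Granting this, the verification on an elementary process $H_t=\xi\,\indic_{(a,b]}(t)$ is a direct computation: for $t\ge s$,
$$
K_t=\Theta_s(H_{t-s})=(\Theta_s\xi)\,\indic_{(a,b]}(t-s)=(\Theta_s\xi)\,\indic_{(a+s,\,b+s]}(t),
$$
since the deterministic indicator is unaffected by $\Theta_s$. By the compatibility above, $\Theta_s\xi$ is bounded and $\rF_{a+s}$-measurable, so $K$ is again an elementary predictable process on $[s,\infty)\times\O$. The monotone-class step then upgrades this to all bounded predictable $H$, because $\Theta_s$ commutes with pointwise limits: if $H^{(n)}\to H$ pointwise, then $\Theta_s(H^{(n)}_{t-s})(\o)=H^{(n)}_{t-s}(\theta_s\o)\to H_{t-s}(\theta_s\o)=\Theta_s(H_{t-s})(\o)$, and pointwise limits of predictable processes are predictable. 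I expect the main obstacle to be purely the bookkeeping around the filtration: proving $\theta_s^{-1}(\rF_a)\subseteq\rF_{a+s}$ cleanly in the augmented setting, and confirming that the single time point $\{s\}$ arising from the $\{0\}$-part of the generating class (where $\rF_0$ is $\PP$-trivial, so this contributes only a deterministic, hence predictable, term) causes no trouble. Once the filtration-shift inclusion is in hand, the rest is routine.
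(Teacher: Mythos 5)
Your proof is correct, but it globalizes differently from the paper. The paper's argument is a composition-of-maps argument: it writes $K=H\circ\phi$ where $\phi(\o,t)=(\theta_s(\o),t-s)$, and proves that $\phi$ is measurable from the predictable $\s$-field on $\O\times[s,+\infty[$ to the predictable $\s$-field on $\O\times\Rp$, by computing $\phi^{-1}(A\times ]u,v])=\theta_s^{-1}(A)\times ]u+s,v+s]$ on generating sets and invoking the inclusion $\theta_s^{-1}(\rF_u)\subset\rF_{u+s}$. Note that this set-level computation is exactly the indicator-function case ($\xi=\indic_A$) of your computation on elementary processes, and the key filtration-shift inclusion, proved via $W_u\circ\theta_s=W_{u+s}-W_s$, is identical in both arguments; so the core content is shared. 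What the paper's route buys is economy: once $\phi$ is known to be measurable for the relevant $\s$-fields, the predictability of $K=H\circ\phi$ is automatic for \emph{every} predictable $H$, with no boundedness hypothesis, no truncation step, no multiplicative-class verification, and no separate treatment of vector-valued processes --- all of which your functional monotone-class scaffolding has to carry explicitly. What your route buys is a more standard-looking reduction (generators of $\rP$ as processes rather than as sets) and a slightly more careful discussion of two points the paper passes over silently: the behaviour of the $\{0\}$-part of the generating class (which becomes the deterministic, hence predictable, slice at time $s$, using triviality of $\rF_0$ on canonical Wiener space) and the stability of the inclusion $\theta_s^{-1}(\rF_u)\subset\rF_{u+s}$ under augmentation of the filtration, which indeed follows from the measure-preservation of $\theta_s$ established just before the lemma.
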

\begin{proof}
The process $K$ as a mapping from $\O\times[s,+\infty[$ to $\RR^d$ is the
composition of $H$ with the mapping $\phi(\o,t)=(\theta_s(\o),t-s)$
from $\O\times [s,+\infty[$ to $\O\times\Rp$. We just need to check
that $\phi$ is measurable for the predictable $\s$-algebra $\rP$. 

Consider a basic
predictable set $A\times ]u,v]$, with $u<v$ and $A\in\rF_u$,  then 
$$
\phi^{-1}(A\times ]u,v])=\theta_s^{-1}(A)\times ]u+s,v+s]\,.
$$
We just need to check that $\theta_s^{-1}(\rF_u)\subset\rF_{u+s}$. The
$\s$-algebra $\rF_u$ is generated by events of the form
$(W^i_t\in[a,b])$, for $t\leq u$ and $i=1,\ldots, d$. The set $\theta_s^{-1}(W^i_t\in[a,b])$
is equal to $(W^i_{t+s}-W^i_s\in[a,b])$, hence it belongs to $\rF_{u+s}$. 

One needs also to note that
$$
\phi^{-1}(A\times\{0\})=\theta_s^{-1}(A)\times\{s\}\in\rF_s\times\{s\}
$$
for all $A\in\rF_0$. We have proved the predictable character of $K$. 
\end{proof} 

\bigskip
In the following, the norm $\norme{\cdot}_2$ is the $L^2((\O,\rF,\PP);\RR^d)$-norm. For a $\RR^d$-valued predictable process $H$ we put
$$
\int_0^t H_s\cdot dW_s=\sum_{i=1}^d\int_0^t H^i_s\, dW^i_s\,.
$$

\begin{lemma}\label{L:IS_theta}
Let $H$ be a predictable process in $\RR^d$ such that $\int_0^{t+s}\normca{H_u}_2\,
du<\infty$. Then we have
\begin{equation}\label{E:IS_theta}
\Theta_s\left(\int_0^tH_u\cdot
dW_u\right)=\int_s^{t+s}\Theta_s(H_{u-s})\cdot dW_u\,.
\end{equation}
\end{lemma}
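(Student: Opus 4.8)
The plan is to prove \eqref{E:IS_theta} first for elementary (simple) predictable integrands, where both stochastic integrals collapse to finite sums and the identity can be checked by hand, and then to extend to arbitrary square-integrable $H$ by an $L^2$ density argument. A preliminary remark is that Lemma~\ref{L:predict_theta} guarantees that the right-hand integrand $K_u=\Theta_s(H_{u-s})$ is genuinely predictable on $[s,+\infty[$, so both sides of \eqref{E:IS_theta} are well-defined Itô integrals; this is precisely why that lemma was isolated beforehand. I would organize the density step around the two linear maps
$$
\Phi(H)=\Theta_s\Bigl(\int_0^tH_u\cdot dW_u\Bigr),\qquad \Psi(H)=\int_s^{t+s}\Theta_s(H_{u-s})\cdot dW_u,
$$
both defined on the space of $\RR^d$-valued predictable processes that are square-integrable on $[0,t]$ for $\PP\otimes du$, the goal being to show $\Phi=\Psi$.

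First I would take a simple process $H_u=\sum_k\xi_k\,\indic_{]t_k,t_{k+1}]}(u)$ with each $\xi_k$ bounded and $\rF_{t_k}$-measurable, so that $\int_0^tH_u\cdot dW_u=\sum_k\xi_k\cdot(W_{t_{k+1}\wedge t}-W_{t_k\wedge t})$. Applying $\Theta_s$ then rests on three facts: that $\Theta_s$ is multiplicative, being composition with $\theta_s$; that $\Theta_s(W_r-W_{r'})=W_{r+s}-W_{r'+s}$, which follows from $\Theta_s(W_r)=W_{r+s}-W_s$; and that $\Theta_s(\xi_k)$ is $\rF_{t_k+s}$-measurable, by the inclusion $\theta_s^{-1}(\rF_{t_k})\subset\rF_{t_k+s}$ established in the proof of Lemma~\ref{L:predict_theta}. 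This yields $\Phi(H)=\sum_k\Theta_s(\xi_k)\cdot(W_{(t_{k+1}+s)\wedge(t+s)}-W_{(t_k+s)\wedge(t+s)})$. On the other side, since $H_{u-s}=\sum_k\xi_k\,\indic_{]t_k+s,t_{k+1}+s]}(u)$, the integrand $K$ is the simple adapted process $\sum_k\Theta_s(\xi_k)\,\indic_{]t_k+s,t_{k+1}+s]}(u)$, whose elementary integral over $[s,t+s]$ is exactly the same finite sum. Hence $\Phi(H)=\Psi(H)$ on simple processes.

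It then remains to pass to the limit, and here the point is that both maps are isometries into $L^2(\O)$. For $\Phi$, the Itô isometry gives $\normca{\Phi(H)}_2=\int_0^t\normca{H_u}_2\,du$ after noting that $\Theta_s$ is itself an $L^2$-isometry. For $\Psi$, the Itô isometry gives $\normca{\Psi(H)}_2=\int_s^{t+s}\normca{K_u}_2\,du$, and since $\theta_s$ preserves $\PP$ one has $\normca{K_u}_2=\normca{H_{u-s}}_2$, so the change of variables $v=u-s$ turns this into $\int_0^t\normca{H_v}_2\,dv$ as well. Thus $\Phi$ and $\Psi$ are two isometries agreeing on the dense subspace of simple processes, hence they coincide on every square-integrable predictable $H$, which is \eqref{E:IS_theta}. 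The only genuinely delicate point is the bookkeeping in the simple-process computation — correctly matching the shifted partition $]t_k+s,t_{k+1}+s]$ and the capped Brownian increments — together with the verification that the two norm computations agree; once the norms are matched through the measure-preservation property of $\theta_s$, the extension by continuity is routine.
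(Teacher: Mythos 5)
Your proof is correct and follows essentially the same route as the paper's: verify the identity on elementary predictable processes using the multiplicativity of $\Theta_s$, then extend by $L^2$-density via the It\^o isometry combined with the fact that $\Theta_s$ is an isometry (since $\theta_s$ preserves $\PP$). You simply spell out the details the paper declares ``obvious'' — the bookkeeping of shifted partitions, the $\rF_{t_k+s}$-measurability of $\Theta_s(\xi_k)$, and the matching norm computations on both sides — which is a faithful and complete elaboration of the same argument.
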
 
\begin{proof}
If $H$ is an elementary predictable process then the identity
\eqref{E:IS_theta} is obvious from the fact that
$\Theta_s(FG)=\Theta_s(F)\,\Theta_s(G)$ for any scalar-valued $F$ and $G$. A general stochastic
integral $\int_0^t H_s\cdot dW_s $ is obtained as a limit in the norm
$$
\normca{\int_0^tH_s\cdot dW_s}_2=\int_0^t \normca{H_s}_2\, ds\,,
$$
of stochastic integrals of elementary predictable processes.
As $\Theta_s$ is an isometry, it is clear that Equation
\eqref{E:IS_theta} holds true for any stochastic integral.
\end{proof}

\bigskip
Here comes now the main result of this section. Before hands recall the following result on stochastic differential equations (cf \cite{RW2}, Chapter V). Let $f$ be a locally bounded Lipschitz function from $\RR^n$ to $\RR^n$ and $g$ a locally bounded Lipschitz function from $\RR^n$ to $M_{n\times d}(\RR)$. Consider the stochastic differential equation
$$
X_t^x=x+\int_0^tf(X^x_u)\, du+\int_0^tg(X^x_u)\cdot dW_u\,,
$$
which is a shorthand for
$$
\left(X_t^x\right)^i=x^i+\int_0^tf(X^x_u)^i\, du+\sum_{j=1}^d\int_0^tg(X^x_u)^i_j\cdot dW^j_u\,,
$$
for all $i=1,\ldots, n$. Then this equation admits a solution $X^x$ and this solution is unique, in the sense that any other process on $(\O,\rF,\PP)$ satisfying the same equation is almost surely identical to $X^x$.

\begin{theorem}\label{T:X_theta}
Let $W$ be a $d$-dimensional Brownian motion on its canonical space $(\O,\rF,\PP)$.
Let $f$ be a locally bounded Lipschitz function from $\RR^n$ to $\RR^n$ and $g$ a locally bounded Lipschitz function from $\RR^n$ to $M_{n\times d}(\RR)$. Denote by $X^x$
the unique stochastic process (in $\RR^n$) which is a solution of the stochastic differential equation
$$
X_t^x=x+\int_0^tf(X^x_u)\, du+\int_0^tg(X^x_u)\cdot dW_u\,.
$$
Then, for all $s\in\Rp$, for almost all $\o\in\O$, we have, for all $t\in\Rp$,
$$
X^{X^x_s(\o)}_t(\theta_s(\o))=X_{s+t}^x(\o)\,.
$$
\end{theorem}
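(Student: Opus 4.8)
The plan is to derive the identity from the \emph{uniqueness} of solutions of the stochastic differential equation, by showing that each side, viewed as a process in $t$, solves one and the same equation. Fix $s\in\Rp$ and introduce the shifted process $\wt W_t=W_{t+s}-W_s=\Theta_s(W_t)$, which was recalled above to be again a $d$-dimensional Brownian motion; its increments on $[0,t]$ are exactly the increments of $W$ on $[s,s+t]$, it is adapted to the filtration $(\rF_{s+r})_r$, and $X^x_s$ is $\rF_s$-measurable, hence independent of $\wt W$. For the right-hand side, I would start from the integral equation for $X^x$ and split each integral at $s$, obtaining
$$
X^x_{s+t}=X^x_s+\int_s^{s+t}f(X^x_u)\,du+\int_s^{s+t}g(X^x_u)\cdot dW_u.
$$
Changing variable $u=v+s$ in the pathwise Lebesgue integral is immediate, and the same substitution in the stochastic integral turns $\int_s^{s+t}g(X^x_u)\cdot dW_u$ into $\int_0^t g(X^x_{v+s})\cdot d\wt W_v$; this is the time-shift behaviour of stochastic integrals, provable by the elementary-process approximation used for Lemma \ref{L:IS_theta} (with predictability secured by Lemma \ref{L:predict_theta}). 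Hence $R_t:=X^x_{s+t}$ solves the same SDE, now driven by $\wt W$, from the initial value $X^x_s$.

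Next I would treat the left-hand side. For a fixed deterministic starting point $y$, apply the operator $\Theta_s$ to the integral equation defining $X^y$: since $\Theta_s$ is composition with $\theta_s$, it passes through the Lebesgue integral and commutes with the deterministic maps $f,g$, while Lemma \ref{L:IS_theta} converts $\Theta_s$ of the stochastic integral into an integral against $\wt W$. This shows that $\Psi^y_t:=\Theta_s(X^y_t)=X^y_t\circ\theta_s$ solves
$$
\Psi^y_t=y+\int_0^t f(\Psi^y_u)\,du+\int_0^t g(\Psi^y_u)\cdot d\wt W_u,
$$
i.e.\ the SDE with the same coefficients, driven by $\wt W$, started at $y$. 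Evaluated at a point $\omega$ this says precisely that $t\mapsto X^y_t(\theta_s(\omega))$ is the $\wt W$-solution issued from $y$.

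I would then combine the two computations by substituting the random initial value $y=X^x_s$ into the second one and invoking uniqueness. Both $R_t=X^x_{s+t}$ and $V_t:=X^{X^x_s}_t(\theta_s)$ solve the SDE driven by $\wt W$ with the same $\rF_s$-measurable (hence $\wt W$-independent) initial datum $X^x_s$; by the pathwise uniqueness recalled just before the statement they agree almost surely, and since both are continuous in $t$ the exceptional null set can be chosen uniformly in $t$ (matching on rational $t$ extends to all $t$). This gives $X^{X^x_s(\omega)}_t(\theta_s(\omega))=X^x_{s+t}(\omega)$ for all $t$, almost surely, for the fixed $s$, which is the assertion.

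I expect the main obstacle to be the legitimacy of inserting the random, $\rF_s$-measurable initial condition $X^x_s$ into the flow $y\mapsto X^y_\cdot(\theta_s)$: the core computation produces, for each \emph{deterministic} $y$, an almost-sure identity carrying its own null set, and one must upgrade this to a single identity holding at the random argument $X^x_s$. The clean route is to use the measurable/continuous dependence of the solution on its initial condition together with the independence of $X^x_s$ from $\wt W$ — for instance by first checking the identity for countably-valued approximations of $X^x_s$ and then passing to the limit — and this is exactly where the Lipschitz, strong-uniqueness hypothesis on $f,g$ is indispensable. The stochastic-integral time-shift, by contrast, is routine once Lemmas \ref{L:predict_theta} and \ref{L:IS_theta} are in hand.
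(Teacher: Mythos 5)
Your proposal is correct, and it rests on the same two pillars as the paper's proof: the shift identity for stochastic integrals (Lemma \ref{L:IS_theta}, with predictability from Lemma \ref{L:predict_theta}) and uniqueness of solutions. But the way you invoke uniqueness is genuinely different, and the difference is worth noting. The paper glues the two sides into a single process
$$
Y^x_u=X^x_u \ \hbox{ for } u\leq s\,,\qquad Y^x_u=X^{X^x_s}_{u-s}\circ\theta_s \ \hbox{ for } u>s\,,
$$
and verifies, by exactly the splitting-and-shifting computation you perform, that $Y^x$ satisfies the \emph{original} equation: driven by $W$, started at the deterministic point $x$. The uniqueness statement recalled just before the theorem then applies verbatim and gives $Y^x=X^x$ almost surely, which is the claim. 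You instead compare the two sides as solutions of the \emph{shifted} equation, driven by $\wt W=\Theta_s W$, with the random initial datum $X^x_s$; this needs pathwise uniqueness for an SDE whose initial condition is an $\rF_s$-measurable random variable and whose driving Brownian motion lives in the shifted filtration $(\rF_{s+r})_{r}$. That is true under the Lipschitz hypotheses (the Gronwall argument does not care whether the initial value is random), but it is strictly more than the uniqueness the paper states and reuses; the paper's concatenation is precisely the device that avoids having to extend it. On the other side of the ledger, you explicitly identify the substitution problem --- plugging the random point $X^x_s$ into the flow $y\mapsto X^y_{\cdot}(\theta_s(\cdot))$, when each deterministic $y$ carries its own exceptional null set --- and you sketch the standard repair (a jointly measurable, continuous-in-$y$ version of the flow, independence of $X^x_s$ from $\theta_s$, approximation by countably valued variables). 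The paper's own computation faces exactly the same issue when it expands $X^{X^x_s}_t(\theta_s(\o))$ via the SDE and passes over it in silence, so on this point your write-up is the more careful of the two.
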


\noindent{\bf Remark}: Let us be clear about the sentence ``for all $s\in\Rp$, for almost all $\o\in\O$, we have, for all $t\in\Rp$\," above. It means that for all $s\in\Rp$, there exists a null-set $\rN_s\subset \O$ such that for all $w\in\O\setminus\rN_s$ we have, for all $t\in\Rp$ ...

\begin{proof}
Let $s$ be fixed. Define, for all $\o\in\O$
$$
Y^x_u(\o)=\begin{cases}X_u^x(\o)&\mbox{\ if } u\leq
s\,,\\ &\\X^{X^x_s(\o)}_{u-s}(\theta_s(\o))&\mbox{\ if } u>s\,.\end{cases}
$$
Then $Y^x_{s+t}$ satisfies
\begin{align*}
Y^x_{s+t}(\o)&=X^{X_s^x(\o)}_{t}(\theta_s(\o))\\
&=X^x_s(\o)+\left[\int_0^t f(X^{X^x_s}_u)\, du\right](\theta_s(\o))+
\left[\int_0^t g(X^{X^x_s}_u)\cdot dW_u\right](\theta_s(\o))\\
&=x+\left[\int_0^s f(X^x_u)\, du\right](\o)+\left[\int_0^s g(X^x_u)\cdot dW_u\right](\o)+\\
&\ \ \ +\int_0^t
f(X^{X^x_s}_u)(\theta_s(\o))\, du+\left[\int_s^{s+t}\Theta_s\left(g(X^{X^x_s}_{u-s})\right)\cdot dW_u\right](\o)
\end{align*}
by Lemma \ref{L:IS_theta}. 

Now, coming back to the definition of $Y$ we get
\begin{align*}
Y^x_{s+t}(\o)&=x+\int_0^s f(Y^x_u)(\o)\, du+\left[\int_0^s g(Y^x_u)\cdot dW_u\right](\o)+\int_0^t
f(Y^{x}_{u+s})(\o)\, du+\\
&\ \ \ +\left[\int_s^{s+t}g(Y^x_{u})\cdot dW_u\right](\o)\\
&=x+\left[\int_0^s f(Y^x_u)\, du+\int_0^s g(Y^x_u)\cdot dW_u+\int_s^{s+t}
f(Y^{x}_{u})\, du+\right.\\
&\ \ \ +\left.\int_s^{s+t}g(Y^x_{u})\cdot dW_u\right](\o)\\
&=x+\left[\int_0^{s+t} f(Y^x_u)\, du+\int_0^{s+t} g(Y^x_u)\cdot dW_u\right](\o)\,.
\end{align*}
This shows that $Y^x$ is solution of the same stochastic differential
equation as $X^x$. We conclude easily by uniqueness of the
solution.
\end{proof}

\subsection{Stochastic Differential Equations and Dynamical Systems}

We are now ready to establish a parallel between stochastic differential
equations and dynamical systems.  Recall how we defined discrete time
dynamical systems $\wt T$ in Section~\ref{S:markov_dynamical} and their associated semigroups $({\wt T} ^n)$. In continuous
time the definition extends in the following way.

 A
\emph{continuous-time dynamical system} on a measurable space $(E,\rE)$ is a
one-parameter family of measurable functions $(\wt T_t)_{t\in\Rp}$ on $E$ such
that $\wt T_s\circ \wt T_t=\wt T_{s+t}$ for all $s,t$. That is, $\wt T$ is a semigroup
of functions on $E$. 

Each of the mappings $\wt T_t$ can be lifted into an
operator on $\rL^\infty(E)$, denoted by $T_t$ and  defined by 
$$
T_t f(x)=f(\wt T_t\, x)\,.
$$

The following result is now a direct application of Theorem \ref{T:X_theta}.

\begin{corollary}\label{C:SDE_dynamical}
Let $W$ be a $d$-dimensional Brownian motion on its canonical space $(\O,\rF,\PP)$.
Let $f$ be a locally bounded Lipschitz function from $\RR^n$ to $\RR^n$ and let $g$ be a locally bounded Lipschitz function from $\RR^n$ to $M_{n\times d}(\RR)$. Consider the stochastic differential equation (on $\RR^n$)
$$
X_t^x=x+\int_0^tf(X^x_u)\, du+\int_0^tg(X^x_u)\cdot dW_u\,.
$$
Then the mappings $\wt T_t$ on $\RR^n\times \O$ defined by
$$
\wt T_t(x,\o)=\left(X^x_t(\o),\theta_t(\o)\right)
$$
define a continuous time dynamical system on $\RR^n\times\O$, in the sense that there exists a null set $\rN\subset \O$ such that for all $\o\in\O\setminus\rN$, for all $x\in\RR^n$ and for all $s,t\in\Rp$ we have
$$
T_t\circ T_s(x,\o) =T_s\circ T_t(x,\o)=T_{s+t}(x,\o)\,.
$$
\end{corollary}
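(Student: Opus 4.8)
The plan is to reduce the claimed semigroup property to two ingredients: the purely deterministic semigroup property of the Wiener shift $\theta$ on $\O$, and the cocycle identity for the solution flow provided by Theorem~\ref{T:X_theta}. First I would record that for every $\o\in\O$ and all $s,t\in\Rp$,
$$
\theta_s\bigl(\theta_t(\o)\bigr)(r)=\theta_t(\o)(r+s)-\theta_t(\o)(s)=\o(r+s+t)-\o(s+t)=\theta_{s+t}(\o)(r)\,,
$$
so that $\theta_s\circ\theta_t=\theta_{s+t}$ holds identically, with no exceptional set. This disposes of the second coordinate of $\wt T_t(x,\o)=(X^x_t(\o),\theta_t(\o))$ once and for all.

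For the first coordinate I would compute the composition directly. For fixed $\o$,
$$
\wt T_s\bigl(\wt T_t(x,\o)\bigr)=\wt T_s\bigl(X^x_t(\o),\theta_t(\o)\bigr)=\Bigl(X^{X^x_t(\o)}_s(\theta_t(\o)),\ \theta_s(\theta_t(\o))\Bigr)\,.
$$
By Theorem~\ref{T:X_theta}, applied with the roles of $s$ and $t$ interchanged, the first entry equals $X^x_{t+s}(\o)=X^x_{s+t}(\o)$, and by the shift law just recorded the second entry equals $\theta_{s+t}(\o)$. Hence $\wt T_s\circ\wt T_t(x,\o)=\wt T_{s+t}(x,\o)$, and since $s+t=t+s$ the same computation run with $s$ and $t$ exchanged gives $\wt T_t\circ\wt T_s(x,\o)=\wt T_{s+t}(x,\o)$; the two compositions therefore coincide. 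The corresponding identity for the lifted operators $T_t$ on $\rL^\infty(\RR^n\times\O)$ follows formally, since $T_t$ is defined by $T_t\,g=g\circ\wt T_t$.

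The delicate point — and the one I expect to be the main obstacle — is the quantifier structure of the exceptional set. Theorem~\ref{T:X_theta} furnishes, for each \emph{fixed} $t$, a null set $\rN_t$ off which the cocycle holds for all $s$ and all $x$; the corollary instead demands a \emph{single} null set $\rN$ valid simultaneously for all $s,t$. To bridge this I would first set $\rN=\bigcup_{t\in\QQ\cap\Rp}\rN_t$, a countable union of null sets and hence null, so that for $\o\notin\rN$ the identity holds for every rational $t$, every real $s$, and every $x$. I would then try to remove the rationality of $t$ by a limiting argument: for fixed $\o\notin\rN$, $x$ and $s$, and rationals $t_k\to t$, the right-hand side satisfies $X^x_{t_k+s}(\o)\to X^x_{t+s}(\o)$ by continuity of the sample path, while on the left the initial condition $X^x_{t_k}(\o)\to X^x_t(\o)$ and the driving path $\theta_{t_k}(\o)\to\theta_t(\o)$ uniformly on compacts.

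The genuine difficulty is that passing to the limit on the left requires continuity of $(a,\o')\mapsto X^a_s(\o')$ jointly in the initial condition and in the driving path, and the dependence on the path is \emph{not} a pathwise-continuous functional in general, since stochastic integrals are only continuous in an $L^2$ or in-probability sense. The clean way around this is to invoke the theory of stochastic flows: under the standing Lipschitz hypotheses on $f$ and $g$ the solution admits a modification that is jointly continuous in $(t,x)$ and for which the cocycle relation holds, off a single null set, simultaneously for all $s,t,x$ (a ``perfect'' cocycle). Working with such a version, the limit on the left is justified and the single exceptional set $\rN$ is produced directly. I would flag this upgrade, from the fixed-$t$ almost-sure statement of Theorem~\ref{T:X_theta} to the uniform-in-$(s,t)$ statement, as the only substantive step; everything else is the bookkeeping of the two displays above.
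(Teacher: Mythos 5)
Your proposal follows essentially the same route as the paper: dispose of the second coordinate via the exact shift identity $\theta_s\circ\theta_t=\theta_{s+t}$, reduce the first coordinate to the cocycle identity of Theorem \ref{T:X_theta}, take a countable union of exceptional null sets over rational parameters, and extend to real parameters by continuity. Two points of comparison are worth making. First, you have the quantifiers of Theorem \ref{T:X_theta} slightly wrong: the theorem is stated for a \emph{fixed} initial point $x$, so its null set depends on $x$ as well as on the shift parameter, and your claim that $\rN_t$ works ``for all $x$'' is not what the theorem provides. The paper handles this explicitly by setting $\rN=\bigcup_{x\in\QQ^n}\bigcup_{s,t\in\QQ^+}\rN_{x,s,t}$ and then invoking continuity of $X^x_t$ in the initial point $x$ at the very end; you need the same additional union over $x\in\QQ^n$ (harmless bookkeeping, but it must be there, since the corollary quantifies over all $x\in\RR^n$). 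Second, the difficulty you flag is real and is in fact the weak point of the paper's own proof: the paper simply asserts that, since $X^x_t(\o)$ is continuous in $t$ and in $x$, ``by continuity the relations remain true'' for all real $s,t$, whereas the left-hand side $X^{X^x_{t_k}(\o)}_s(\theta_{t_k}(\o))$ involves the solution evaluated along the \emph{moving driving path} $\theta_{t_k}(\o)$, and $\o'\mapsto X^a_s(\o')$ is not a pathwise-continuous functional, so this limit is not justified by sample-path continuity alone. Your proposed repair --- passing to a jointly continuous stochastic-flow modification for which the cocycle identity holds perfectly (off one null set, simultaneously in $s$, $t$ and $x$) --- is a standard and legitimate way to close this, and it also absorbs the $x$-dependence issue above. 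In short: modulo the quantifier slip on $x$, your argument is the paper's argument, made more honest about its one delicate step.
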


\begin{proof}
The null set $\rN_s$ appearing in Theorem \ref{T:X_theta} also depends on the initial point $x\in\RR^n$. Let us denote by $\rN_{x,s}$ this set, instead.
Let $\rN_{x,s,t}$ be the null set $\rN_{x,s}\cup\rN_{x,t}$. Finally put 
$$
\rN=\bigcup_{x\in\QQ^n}\bigcup_{s,t\in\QQ^+} \rN_{x,s,t}\,.
$$
Then $\rN$ is a null set and for all $\o\in\O\setminus\rN$ the relations
$$
T_t\circ T_s(x,\o) =T_s\circ T_t(x,\o)=T_{s+t}(x,\o)
$$
hold true for all $x\in\QQ^n$ and all $s,t\in\QQ^+$, by Theorem \ref{T:X_theta}.

The solution $X^x_t(\o)$ is continuous in $t$, except for a null set $\rN'$ of $\o$'s. Hence, by continuity, the relations above remain true for all $s,t\in\Rp$, if $\o\in\O\setminus(\rN\cup\rN')$.  

In the same way, as the solution $X^x_t$ depends continuously in $x$, we conclude easily.
\end{proof}

This is to say that, apart from this minor restriction to the complementary of a null set in $\O$, a stochastic differential equation is nothing more
than a deterministic dynamical system on a product set $\RR^n\times\O$,
that is, it is a semigroup of point transformations of this product
set. 

\medskip We now have a result analogous to the one of Theorem \ref{T:dilate_markov}
when this dynamical system is restricted to the $\RR^n$-component. 
But before establishing this result, we need few technical lemmas.
In the following $\O_{t]}$ denotes the space of continuous functions
from $[0,t]$ to $\RR^d$. For all $\o\in\O$ we denote by $\o_{t]}$ the restriction
of $\o$ to $[0,t]$. Finally $\PP_{t]}$ denotes the restriction of the
measure $\PP$ to $(\O_{t]}\,,\rF_t)$.

\begin{lemma}\label{L:o_thetao}
The image of the measure $\PP$ under the mapping 
$$
\begin{matrix} \O&\rightarrow&\O_{t]}\times\O\\
\o&\mapsto&(\o_{t]}\,,\theta_t(\o))
\end{matrix}
$$
is the measure $\PP_{t]}\otimes \PP$.
\end{lemma}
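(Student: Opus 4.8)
The plan is to prove the equality of the two probability measures on $\O_{t]}\times\O$ by checking that they agree on a $\pi$-system that generates the product $\s$-field, and then to invoke the uniqueness part of the monotone class (Dynkin) theorem. The conceptual content is nothing more than the elementary Markov property of Brownian motion: the shifted path $\theta_t(\o)$ is independent of the past $\o_{t]}$ and is itself a Brownian motion. The work is to turn this into a statement about the joint image measure.

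First I would fix the generating class. The Borel $\s$-field of $\O$ is generated by the finite-dimensional cylinders $\{\o;\ \o(s_1)\in A_1,\ldots,\o(s_q)\in A_q\}$, with $0<s_1<\cdots<s_q$ and the $A_j$ Borel in $\RR^d$, and likewise for $\O_{t]}$ with times restricted to $[0,t]$. The products of two such cylinders form a $\pi$-system generating the product $\s$-field on $\O_{t]}\times\O$, so it is enough to evaluate both measures on a set of the form
$$
\{\o;\ \o(u_i)\in A_i,\ i=1,\ldots,p\}\ \cap\ \{\o;\ \theta_t(\o)(v_j)\in C_j,\ j=1,\ldots,q\}
$$
with $0\le u_1<\cdots<u_p\le t$ and $0<v_1<\cdots<v_q$.

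The key step is then to write $\theta_t(\o)(v_j)=\o(t+v_j)-\o(t)$ and to note that the variables $(\o(u_i))_i$ are measurable functions of the increments of $\o$ over $[0,t]$, whereas the variables $(\o(t+v_j)-\o(t))_j$ are measurable functions of the increments of $\o$ over $[t,+\infty[$. By the independence of the increments of Brownian motion over disjoint intervals, these two families are independent under $\PP$, so the probability of the intersection above factors as the product of the two marginal probabilities.

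Finally I would identify the two factors. The first is by definition the value of $\PP_{t]}$ on the corresponding cylinder of $\O_{t]}$. For the second, the fact recorded just before Lemma \ref{L:predict_theta} that $\theta_t$ preserves $\PP$ gives $\PP(\theta_t(\o)(v_j)\in C_j,\ j)=\PP(\o(v_j)\in C_j,\ j)$, which is exactly the value of $\PP$ on the matching cylinder of $\O$. Hence both measures assign the same product value to every set of the $\pi$-system, and they coincide. The only genuinely delicate point is the measure-theoretic bookkeeping, namely that the product cylinders really do form a generating $\pi$-system and that the increments before and after time $t$ generate the relevant $\s$-fields; the independence of increments and the $\theta_t$-invariance of $\PP$ are by now standard facts about Brownian motion.
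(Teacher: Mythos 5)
Your proposal is correct and follows essentially the same route as the paper's own proof: evaluate both measures on finite-dimensional product cylinders, factor the probability using the independence of Brownian increments before and after time $t$, identify the second factor via the $\theta_t$-invariance of $\PP$, and conclude because measures agreeing on such cylinders coincide. If anything, you are slightly more explicit than the paper on two points it leaves implicit, namely the $\pi$-system/Dynkin uniqueness argument and the use of measure preservation by $\theta_t$ to identify the second factor with $\PP$ itself.
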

\begin{proof}
Recall that $\o(s)=W_s(\o)$ and
$\theta_t(\o)(s)=W_{t+s}(\o)-W_t(\o)$. If $A$ is a finite
cylinder of $\O_{t]}$ and $B$ a finite cylinder of $\O$, then the set
$$
\{\o\in\O\,;\ (\o_{t]}\,,\theta_t(\o))\in A\times B\}
$$
is of the form
\begin{multline*}
\{\o\in\O\,;\ W_{t_1}(\o)\in A_1, \ldots, W_{t_n}(\o)\in A_n,(W_{s_1}-W_t)(\o)\in
B_1, \ldots\hfill\\
\hfill\ldots, (W_{s_k}-W_t)(\o)\in B_k\}
\end{multline*}
for some $t_1,\ldots, t_n\leq t$ and some $s_1,\ldots, s_k> t$. By the
independence of the Brownian motion increments, the probability of the
above event is equal to 
\begin{multline*}
\PP_{t]}(\{\o\in\O_{t]}\,;\ W_{t_1}(\o)\in A_1, \ldots, W_{t_n}(\o)\in
A_n\})\times\hfill\\
\hfill \times\PP(\{\o\in\O\,;\ (W_{s_1}-W_t)(\o)\in
B_1, \ldots, (W_{s_k}-W_t)(\o)\in B_k\})\,.
\end{multline*}
This is to say,
\begin{multline*}
\PP(\{\o\in\O\,;\ (\o_{t]}\,,\theta_t(\o))\in A\times B\})=\hfill\\
\hfill=\PP_{t]}(\{\o_{t]}\in\O_{t]}\,;\
\o_{t]}\in A\})\,\PP(\{\o\in\O\,;\ \theta_t(\o)\in B\})\,.
\end{multline*}
This is exactly the claim of the lemma for the cylinder sets. As the
measures $\PP$ and $\PP_{t]}\otimes \PP$ are determined by their values on the
cylinder sets, we conclude easily. 
\end{proof}

\begin{lemma}\label{L:g(o,oprime)}
Let $g$ be a bounded measurable function on $\O_{t]}\times\O$. Then we
have
$$
\int_\O\int_{\O_{t]}} g(\o, \o')\, dP_{t]}(\o)\, dP(\o')=\int_\O
g(\o_{t]}, \theta_t(\o))\, dP(\o)\,.
$$
\end{lemma}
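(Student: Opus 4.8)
The plan is to recognize this identity as nothing more than the combination of the image-measure (transfer) theorem with Fubini's theorem, applied to the pushforward already computed in Lemma \ref{L:o_thetao}. I would introduce the measurable map
$$
\Phi\ :\ \O\longrightarrow \O_{t]}\times\O,\qquad \Phi(\o)=(\o_{t]}\,,\theta_t(\o))\,,
$$
and note that Lemma \ref{L:o_thetao} asserts precisely that the image measure $\Phi_*\PP$ equals the product measure $\PP_{t]}\otimes\PP$.

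First I would apply the transfer theorem to $\Phi$: for any bounded measurable $g$ on $\O_{t]}\times\O$,
$$
\int_{\O_{t]}\times\O} g\, d(\Phi_*\PP)=\int_\O (g\circ\Phi)\, d\PP=\int_\O g(\o_{t]}\,,\theta_t(\o))\, d\PP(\o)\,,
$$
which is exactly the right-hand side of the claim. Boundedness of $g$ guarantees integrability against the probability measures involved, so the transfer theorem applies with no integrability caveats. Next I would rewrite the same left-hand integral using Lemma \ref{L:o_thetao} and then Fubini: since $\Phi_*\PP=\PP_{t]}\otimes\PP$,
$$
\int_{\O_{t]}\times\O} g\, d(\Phi_*\PP)=\int_{\O_{t]}\times\O} g\, d(\PP_{t]}\otimes\PP)=\int_\O\int_{\O_{t]}} g(\o,\o')\, d\PP_{t]}(\o)\, d\PP(\o')\,,
$$
the last step being Fubini (again licensed by boundedness of $g$). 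Comparing the two displays yields the asserted identity.

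The main point is that there is essentially no obstacle here: all the substantive work — identifying the joint law of $(\o_{t]},\theta_t(\o))$ as a product measure, i.e. the independence of the path up to time $t$ from the increments after $t$ — was already carried out in Lemma \ref{L:o_thetao}. If one prefers not to quote the transfer theorem in this form, the only alternative worth mentioning is to verify the identity first for $g=\indic_{A\times B}$, with $A$ a cylinder of $\O_{t]}$ and $B$ a cylinder of $\O$ (where it reduces verbatim to Lemma \ref{L:o_thetao}), and then propagate it to all bounded measurable $g$ by linearity and the functional monotone class theorem. The only point needing a line of care in either route is the measurability of $\Phi$, which follows from the continuity of $\o\mapsto\o_{t]}$ and $\o\mapsto\theta_t(\o)$ used implicitly above.
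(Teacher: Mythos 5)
Your proof is correct and follows essentially the same route as the paper, which likewise disposes of the lemma in one line by invoking the Transfer Theorem for the mapping $\o\mapsto(\o_{t]}\,,\theta_t(\o))$ whose image measure was identified as $\PP_{t]}\otimes\PP$ in Lemma \ref{L:o_thetao}. Your version merely spells out the details (the pushforward identity, Fubini, and the measurability of the map), which the paper leaves implicit.
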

\begin{proof}
This is just the Transfer Theorem for the mapping of Lemma \ref{L:o_thetao}.
\end{proof}

\begin{theorem}\label{T:SDE_dilate}
Let $(\O,\rF,\PP)$ be the canonical space of a $d$-dimensional Brownian motion
$W$. Let $f$ be a locally bounded Lipschitz function from $\RR^n$ to $\RR^n$ and let $g$ be a locally bounded Lipschitz function from $\RR^n$ to $M_{n\times d}(\RR)$. Consider the stochastic differential equation (on $\RR^n$)
$$
X_t^x=x+\int_0^t f(X^x_u)\, du+\int_0^tg(X^x_u)\cdot dW_u
$$
and the associated dynamical system 
$$
\wt T_t(x,\o)=\left(X^x_t(\o),\theta_t(\o)\right)\,.
$$
For any bounded function $h$ on $\RR^n$ consider the mapping
$$
P_t\,h(x)=\EE\left[\,T_t(h\otimes \indic)(x,\,\cdot\,)\right]=\int_\O h(X^x_t(\o))\, d\PP(\o)\,.
$$
Then $(P_t)_{t\in\Rp}$ is a Markov semigroup on $\RR^n$ with generator
$$
A=\sum_{i=1}^n f_i(x)\,\frac \partial{\partial x_i}+\frac 12\sum_{i,j=1}^n \sum_{\a=1}^d g^i_\a (x) g^j_\a(x)\,\frac{\partial^2}{\partial x_i\,\partial x_j}\,.
$$
\end{theorem}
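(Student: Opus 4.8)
The plan is to verify the three defining properties of a Markov semigroup in turn—that each $P_t$ is a Markov operator, that $(P_t)_{t\in\Rp}$ is a one-parameter semigroup, and that its infinitesimal generator is the stated operator $A$—and to treat the generator computation as the genuinely analytic part. The first property is essentially free: it is a direct application of Theorem \ref{T:dynamical_markov} to the continuous-time dynamical system $\wt T_t$ on the product space $\RR^n\times\O$, with $\O$ in the role of the environment $F$ and the Wiener measure $\PP$ in the role of $\mu$. Here the relevant $X$-mapping is $(x,\o)\mapsto X^x_t(\o)$, and averaging the lifted function $h\otimes\indic$ over $\O$ against $\PP$ produces, by that theorem, a Markov operator whose kernel is the law of $X^x_t$; positivity, $P_t\indic=\indic$ and $\norme{P_t}=1$ follow at once, and $P_0=\mathrm{Id}$ since $X^x_0=x$.

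For the semigroup property $P_{s+t}=P_s\circ P_t$ I would combine the flow identity of Theorem \ref{T:X_theta} with the independence encoded in Lemmas \ref{L:o_thetao} and \ref{L:g(o,oprime)}. Starting from
$$
P_{s+t}h(x)=\int_\O h\bigl(X^x_{s+t}(\o)\bigr)\,d\PP(\o)=\int_\O h\Bigl(X^{X^x_s(\o)}_t\bigl(\theta_s(\o)\bigr)\Bigr)\,d\PP(\o),
$$
I would use that $X^x_s$ is $\rF_s$-measurable, hence a function of the restricted path $\o_{s]}$, say $X^x_s(\o)=\Xi(\o_{s]})$. Setting $G(\o',\o'')=h\bigl(X^{\Xi(\o')}_t(\o'')\bigr)$ on $\O_{s]}\times\O$, the integrand above is exactly $G(\o_{s]},\theta_s(\o))$, so Lemma \ref{L:g(o,oprime)} factorizes the integral into a double integral over $\O_{s]}\times\O$. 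A Fubini exchange then lets me integrate out the driving noise $\o''$ first, giving $\int_\O h(X^{\Xi(\o')}_t(\o''))\,d\PP(\o'')=(P_th)(\Xi(\o'))$, and integrating the remaining variable against $\PP_{s]}$ returns $\int_\O (P_th)(X^x_s(\o))\,d\PP(\o)=P_s(P_th)(x)$, which is precisely the semigroup law.

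Finally, for the generator I would apply Itô's formula to $h(X^x_t)$ for $h$ smooth with suitable growth control. The drift contribution collects $\sum_i f_i\,\pa_i h$, while the quadratic-variation term, using $d\croo{X^{x,i}}{X^{x,j}}_u=\sum_\a g^i_\a(X^x_u)g^j_\a(X^x_u)\,du$, collects $\tfrac12\sum_{i,j}\bigl(\sum_\a g^i_\a g^j_\a\bigr)\pa_{ij}h$; together these are exactly $Ah$. Taking expectations annihilates the stochastic-integral martingale part and yields Dynkin's identity $P_th(x)=h(x)+\int_0^t P_u(Ah)(x)\,du$, whence $\lim_{t\to0^+}(P_th-h)/t=Ah$ by continuity of $u\mapsto P_u(Ah)(x)$ at $0$. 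The main obstacle is analytic rather than structural: since $f$ and $g$ are only locally bounded and the trajectories need not remain in a compact set, the stochastic integral is a priori only a local martingale, so I would localize along exit times $\tau_R=\inf\{u:\ab{X^x_u}\geq R\}$, run the Dynkin argument on $[0,t\wedge\tau_R]$, and pass to the limit using local boundedness of the coefficients together with the regularity of $X^x$. One must also pin down the precise domain on which $A$ acts as the generator (for instance $C^2_c(\RR^n)$, or $C^2_b$ with bounded derivatives), which is where the phrase ``with generator $A$'' has to be made exact.
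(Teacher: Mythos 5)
Your proposal is correct and follows essentially the same route as the paper: each $P_t$ is a Markov operator by Theorem \ref{T:dynamical_markov}, and the semigroup law rests on exactly the same two ingredients — the flow identity of Theorem \ref{T:X_theta} and the factorization Lemma \ref{L:g(o,oprime)} — with the only difference being that the paper starts from $P_t(P_s h)$ and collapses the double integral into $P_{s+t}h$, while you start from $P_{s+t}h$ and factorize it into $P_s(P_t h)$. For the generator, the paper simply appeals to the standard theory (citing \cite{R-Y}, Chapter VII), so your It\^o/Dynkin localization sketch fills in what the paper delegates to a reference rather than departing from its method.
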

\begin{proof}
The fact that each $P_t$ is a Markov operator is a consequence of
Theorem \ref{T:dynamical_markov}. Let us check that they form a
semigroup. 

First of all note that, since $X^x$ is a predictable process, the quantity $X^x_t(\o)$ depends only on $\o_{t]}$ and not on the whole of $\o$. We shall denote by $X^x_t(\o_{t]})$ the associated function of $\o_{t]}$. 

By definition of $P_t$ we have
\begin{align*}
P_t(P_s\,h)(x)&=\EE\left[T_t(P_sh\otimes
\indic)(x,\,\cdot\,)\right]\\
&=\int_\O P_sh(X^x_t(\o))\,d\PP(\o)\\
&=\int_\O\,\int_\O h\left(X^{X^x_t(\o)}_s(\o')\right)\, d\PP(\o')\, d\PP(\o)\\
&=\int_{\O_{t]}}\,\int_\O h\left(X^{X^x_t(\o)}_s(\o')\right)\, d\PP(\o')\, d\PP_{t]}(\o)\\
&=\int_\O h\left(X^{X^x_t(\o_{t]})}_s(\theta_t(\o))\right)\, d\PP(\o)\qq\mbox{(by
Lemma \ref{L:g(o,oprime)})}\\
&=\int_\O h\left(X^{x}_{s+t}(\o)\right)\, d\PP(\o)\qq\mbox{(by
Theorem \ref{T:X_theta})}\\
&=P_{s+t}h(x)\,.
\end{align*}
We have proved the semigroup property. 

The rest of the proof comes from the usual theory of Markov semigroups and their associated generators (see for example \cite{R-Y}, Chapter VII). 
\end{proof}

\bigskip
We have proved the continuous time analog of Theorem
\ref{T:dilate_markov}. Every Markov semigroup, with a generator of the
form above, can be dilated on a larger set (a product set) into a deterministic 
dynamical system. What is maybe more surprising is that the deterministic dynamical system in question is a stochastic differential equation. Theorem \ref{T:SDE_dilate}  and Corollary \ref{C:SDE_dynamical} show that a stochastic differential equation can actually be seen as a particular deterministic dynamical system. 

Theorem \ref{T:SDE_dilate} above again gives an open system point of view on
Markov processes: Markov processes are obtained by the restriction of
certain types of dynamical systems on a product space, when one is
averaging over one inaccessible component. The role of the environment is now played by the Wiener space and the role of the global dynamics on the product space is played by the stochastic differential equation.

\smallskip
In this section we have developed the Brownian case only. But it is
clear that all this discussion extends exactly in the same way to the
case of the Poisson process. Indeed, the arguments developed above are mostly 
only based on the independent increment property. 

\smallskip
We have said that stochastic differential equations are particular dynamical systems which are continuous analogues of those of Section \ref{SS:RI}: repeated interactions. In the article \cite{julien}, the convergence of discrete-time repeated interactions models to stochastic differential equations is proved. 

\par\bigskip\noindent
{\bf Comment.} We do not pretend that all the results presented in this article are new. Let us be clear about that. The fact that restrictions of dynamical systems can give rise to Markov chains is rather well-known among specialists of dynamical systems. The results of Subsection \ref{SS:defect} are adaptations to the classical context of similar results on completely positive maps for quantum systems. The fact that stochastic differential equations give rise to deterministic dynamical systems is also not new and can be found for example in \cite{Car} (see also \cite{App} for more general noises). 

The originality of our article lies more in its survey character, in the way we put all these results together, in the connection we make with repeated interaction systems and in the physical point of view we adopt. 

\par\bigskip\noindent
{\bf Acknowledgment.} The author is very grateful to the referee of this article for his very carefull reading, his remarks and suggestions.

\bibliographystyle{amsplain}
\vfill\eject

\end{document}